\numberwithin{equation}{section}
\newtheorem{theorem}{Theorem}[section]
\newtheorem{corollary}[theorem]{Corollary}
\newtheorem{lemma}[theorem]{Lemma}
\newtheorem{proposition}[theorem]{Proposition}
\theoremstyle{definition}
\newtheorem{definition}[theorem]{Definition}
\newtheorem{remark}[theorem]{Remark}
\newcommand{\rfb}[1]{\mbox{\rm
		(\ref{#1})}\ifx\undefined\stillediting\else:\fbox{$#1$}\fi}
\renewcommand{\geq} {\geqslant}
\newcommand{\mm}    {{\hbox{\hskip 0.5pt}}}
\newcommand{\bluff} {{\hbox{\raise 15pt \hbox{\mm}}}}
\newcommand{\sbluff}{{\hbox{\raise 10pt \hbox{\mm}}}}
\title[Small oscillations of a floating cylinder]
{On an initial value problem describing the small oscillations of a floating cylinder} 
\author[Vicente Ocqueteau and Marius Tucsnak]{}
\subjclass{Primary: 76B15, 47D06; Secondary: 35Q35, 35Q31.}
\keywords{Water waves equations, fluid-structure interactions, operator semigroup, infinite dimensional system.}
\thanks{$^*$Corresponding author}
\begin{document}
\maketitle

\centerline{\scshape
Vicente Ocqueteau$^{1*}$
and Marius Tucsnak$^1$}

\medskip

{\footnotesize
 \centerline{$^1$Institut de Math\'ematiques de Bordeaux (IMB)}
 \centerline{Universit\'e de Bordeaux}
 \centerline{351, Cours de la Lib\'eration - F 33 405, France}

} 

\medskip


\bigskip


\begin{abstract}
We study a coupled PDE-ODE system modeling the small oscillations of a floating cylinder interacting with small water waves. 
We consider the case when the floating body is an infinite circular cylinder, so that the equations of the free surface
of the fluid can be written in one space dimension.
The governing equations are formulated as an abstract evolution equation in a suitable Hilbert space, and we establish the well-posedness of the associated initial value problem. A key element of the proof is the analysis of a partial Dirichlet-to-Neumann map on an unbounded domain with a non-smooth boundary.
\end{abstract}


\mathtoolsset{showonlyrefs}
\section{Introduction} 
Understanding the dynamics of a floating body interacting with ocean waves is a problem of major interest for many application fields, such as naval engineering, floating wind turbines or  wave energy converters.
The mathematical analysis of the models describing such interactions is a complex problem. Indeed, in addition to the difficulties raised by the equations of water waves, new challenges are added by the presence of the fluid-solid interface and to the coupling of the equations describing the various components of the system. In particular, the possible appearance of corners in the fluid domain induces difficulties which are not usually present in the study of water waves in  smooth (possibly periodic) domains. We refer to the recent work of Lannes and Ming \cite{lannes2024posednessfjohnsfloating} for a more detailed description of the state of the art in this field.  When the full water system is replaced by some reduced models valid within the shallow water approximation or the Boussinesq regime, several results are available in the literature, see, for instance, Lannes and Iguchi \cite{iguchi2021hyperbolic},  Bresch, Lannes and M{\'e}tivier \cite{bresch2021waves} and the references therein.

 Most of the existing works on wave-structure interactions are based on the linear potential flow approach, which has been introduced in a rigorous manner in John \cite{john1949motion} and then used in various articles. These works consider essentially time harmonic motions, for both rigid or deformable bodies (see, for instance, Kuznetsov and Mazya \cite{kuznetsov2003linear} or Hazard and Lenoir \cite{hazard1993determination}). Much less has been done on the analysis of the corresponding systems in the time domain. As far as we know, within the linear potential flow approach, 
 the well-posedness of the initial value problem describing the coupled motion of the floating object and of the free surface of the fluid has not been explicitly considered in the literature. Some of the difficulties raised by this problem, namely those connected to the simultaneous presence of corners and of an unbounded fluid domain,  have been recently tackled in \cite{lannes2024posednessfjohnsfloating}. In the above mentioned reference, the authors consider  the linear potential flow approach, when the horizontal dimension is $d=1$, the fluid is supposed of finite depth and the floating solid is replaced by a partially immersed obstacle.
 
Our work tackles a much simpler geometric context, introduced in Ursell \cite{ursell_1964_decay}.  The considered system  describes the coupled vertical oscillations of a floating infinite circular cylinder and of the free surface of an ideal fluid of infinite depth.   The equilibrium position of the cylinder  is assumed to be attained when half of its lateral surface is immersed, see Figure \ref{fig1f}. 
\begin{figure}[htbp]
	\centering\includegraphics[width=8.6cm]{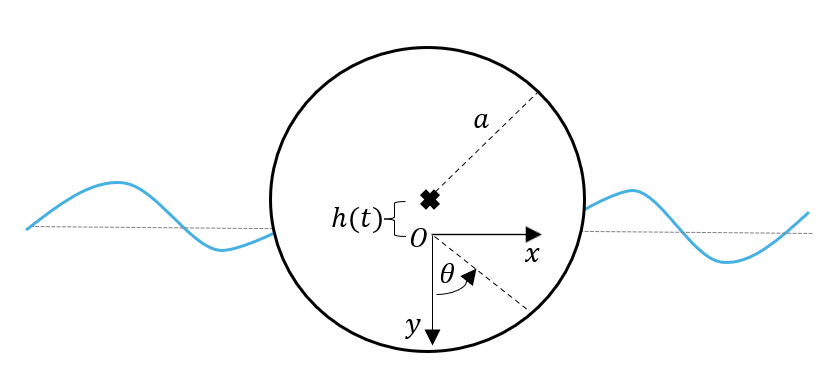}
	\caption{Floating cylinder} \label{fig1f}
\end{figure} 
Our main contribution consists in proving that the initial value problem describing the vertical oscillations of this coupled system is well-posed in standard Sobolev spaces. Due to the particular geometric configuration, some of the main technical difficulties can be solved in a quite elementary manner. More precisely, an important point of our methodology consists in deriving explicit formulas for the two potentials of the fluid velocity (one generated by the oscillations of the free surface of the fluid and a second one by those of the floating cylinder). Moreover, these explicit formulas are exploited to obtain the main ingredient of our approach: the proof of the fact that an associated Dirichlet-to-Neumann map defines an unbounded positive operator on an $L^2$ space. This fact allows us to construct a $C^0$ group of operators to describe the dynamics of the considered system.

The remaining part of this paper is organized as described below. Firstly, Section \ref{sec_statement} is devoted to the precise formulation of the considered system and to the statement of our main result. In Section \ref{sec_half}, we recall some basic properties on the classical Dirichlet-to-Neumann map for the Laplacian in the upper half-plane. In Section \ref{sec_problem_Laplacian}, we consider a problem with mixed boundary conditions for the Laplacian in a domain $\Omega$ obtained by intersecting the upper half plane with the exterior of the unit disk in $\mathbb{R}^2$. In Section \ref{sec_model_bis}, we construct and study the Dirichlet-to-Neumann map $\Lambda_\Omega$ associated with the boundary value problem introduced in Section \ref{sec_problem_Laplacian}. Finally, in Section \ref{sec_coupling}, we prove  our main result.

\section{Statement of the main result}\label{sec_statement}
In this section we give, following  \cite{ursell_1964_decay} (which, in turn, follows  \cite{john1949motion}), the precise formulation of the governing equations. Moreover, we give the precise statement of our main result. 

Within the geometric context in  Figure \ref{fig1f} above, the transverse section of the cylinder is supposed to be a disk of radius $1$. 
We assume that, at equilibrium, the center of this disk lies on the free surface of the fluid (roughly speaking, this means that the density of the solid is half of the density of the fluid).
We consider a Cartesian coordinate system $(x, y)$ with the origin fixed at the center of this disk.  
The $x$-axis is taken to be horizontal, while the $y$-axis increases with depth. The polar coordinates $(r, \theta)$ of a point in the fluid domain are defined by $x = r \sin \theta, \quad y = r \cos \theta$, with $\theta\in \left[-\frac{\pi}{2},\frac{\pi}{2}\right]$ and $r>1$. Moreover, at equilibrium, the surface of the cylinder is described by $r = 1$ and $\theta\in \left[-\frac{\pi}{2},\frac{\pi}{2}\right]$.

Let $h(t)$ denote the vertical displacement, at time $t\geqslant 0$, of the centre of the disk. The depth of the fluid is assumed to be infinite. Moreover, the amplitude of the motion is supposed to be sufficiently small so that we can use a linearized version of the governing equations. Neglecting viscosity, assuming that the flow is irrotational and that the fluid density is a constant $\rho>0$, the velocity field of the fluid is $\nabla\varphi$, where the velocity potential $\varphi(t, x, y)$ satisfies 
\begin{equation}\label{2.1}
\frac{\partial^2\varphi}{\partial x^2}(t,x,y) +
\frac{\partial^2\varphi}{\partial y^2}(t,x,y) =0 \qquad\qquad(t\geqslant 0, x^2+y^2>1, y>0).
\end{equation}
The linearized condition on the free surface, following from Bernoulli's law, is
\begin{equation}\label{2.2}
\frac{\partial^2\varphi}{\partial t^2}(t,x,0)-g \frac{\partial\varphi}{\partial y}(t,x,0)=0
\qquad\qquad(t\geqslant 0,\ |x|>1).
\end{equation}
On the cylinder the radial  components of the velocities
of the body and of the fluid are equal, {\it i.e.},

\begin{equation} \label{2.3}
\frac{\partial\varphi}{\partial r}(t,\sin\theta,\cos\theta)=\dot h(t)\cos\theta \qquad\qquad \left(t\geqslant 0,\ -\frac{\pi}{2}\leqslant\theta\leqslant \frac{\pi}{2}\right).
\end{equation}
The equation of motion for $h(t)$ is given by
\begin{equation}\label{2.4}
\frac12 \pi\rho \ddot h(t)=-2\rho g h(t)+\rho \int_{-\pi/2}^{\pi/2}\frac{\partial\varphi}{\partial t}(t,\sin\theta,\cos\theta)\cos\theta \, {\rm d}\theta+f(t),
\end{equation}
where on the right-hand side the first term stands for the hydrostatic restoring force, the second term is the resultant of the hydrodynamic pressures, whereas the third term describes a possible applied vertical force.

As usual in wave dynamics the fluid equations can be reformulated as an equation on the free boundary. More precisely, let $\Omega$ denote the water domain and $\mathcal{E}$ the free surface, i.e.,
\begin{equation}\label{Omega}
    \Omega=\left\{\left.\begin{bmatrix} x\\ y\end{bmatrix}\in \mathbb{R}^2\ \ \right|\ \ x^2+y^2>1,y>0\right\}.
\end{equation}
\begin{equation}\label{Mathcal_E}
    \mathcal{E}=\{x\in\mathbb{R} \ \ | \ \ |x|>1\}.
\end{equation}
The PDE-ODE system formed by \eqref{2.1}, \eqref{2.2}, \eqref{2.3} and \eqref{2.4}, of unknowns $\varphi$ and $h$, can be reformulated in terms of the trace of the potential on the free surface $v(t,x)=\varphi(t,x,0)$ (defined for $t\geqslant 0$ and $|x|>1$) and of $h$. Indeed, by linearity, we can write
\begin{equation}\label{dec_phi}
    \varphi(t,x,y)=(D_{\Omega}v)(t,x,y)+\dot{h}(t)\varphi_1(x,y)\qquad\left(t\geqslant 0, \begin{bmatrix}x\\y\end{bmatrix}\in\Omega\right),
\end{equation}
where
\begin{equation}
    \varphi_1(x,y)=-\frac{y}{x^2+y^2}\qquad\qquad \left(\begin{bmatrix}x\\y\end{bmatrix}\in\Omega\right),
\end{equation}
which satisfies
\begin{equation}
\left\{
\begin{aligned}
 &\frac{\partial^2\varphi_1}{\partial x^2}(x,y)+\frac{\partial^2\varphi_1}{\partial y^2}(x,y)=0 &\left(\begin{bmatrix}x\\y\end{bmatrix}\in\Omega\right)\\
 &\frac{\partial\varphi_1}{\partial r}(\sin\theta,\cos\theta)=\cos\theta &(-\pi/2\leqslant\theta\leqslant\pi/2)\\
 &\varphi_1(x,0)=0 & (x\in\mathcal{E}),
\end{aligned}
\right.
\end{equation}
and $D_\Omega$ is {\em the Dirichlet operator} on $\Omega$. This means that for each $v$ in an appropriate function space, $D_{\Omega}v$ solves
\begin{equation}\label{D_Omega_first}
\left\{
\begin{aligned}
 &\frac{\partial^2(D_{\Omega}v)}{\partial x^2}(t,x,y)+\frac{\partial^2(D_{\Omega}v)}{\partial y^2}(t,x,y)=0 &\left(\begin{bmatrix}x\\y\end{bmatrix}\in\Omega\right)\\
 &\frac{\partial(D_{\Omega}v)}{\partial r}(t,\sin\theta,\cos\theta)=0 &(-\pi/2\leqslant\theta\leqslant\pi/2)\\
 &(D_{\Omega}v)(t,x)=v(t,x) &(x\in\mathcal{E}).
\end{aligned}
\right.
\end{equation}
We refer to Section \ref{sec_problem_Laplacian} for a detailed construction of $D_\Omega$. 

From \eqref{dec_phi} it follows that
\begin{equation}
\begin{aligned}
\int_{-\pi/2}^{\pi/2}\frac{\partial\varphi}{\partial t}(t,\sin\theta,\cos\theta)\cos\theta \, {\rm d}\theta&=\int_{-\pi/2}^{\pi/2}\frac{\partial(D_{\Omega} v)}{\partial t}(t,\sin\theta,\cos\theta)\cos\theta\,{\rm d}\theta\\
&\quad-\ddot{h}(t)\int_{-\pi/2}^{\pi/2}\cos^2\theta\,{\rm d}\theta\\
&=\int_{-\pi/2}^{\pi/2}\frac{\partial(D_{\Omega}v)}{\partial t}(t,\sin\theta,\cos\theta)\cos\theta\,{\rm d}\theta-\frac{\pi}{2}\ddot{h}(t).
\end{aligned}
\end{equation}
 An important role in the remaining part of this paper will be played by the {\em Dirichlet-to-Neumann operator} on $\Omega$. This operator, denoted 
 by $\Lambda_\Omega$, will be precisely defined in Section \ref{sec_model_bis}. At this stage, we formally define the action of $\Lambda_\Omega$ on 
 every suitable $v:\mathcal{E}\to \mathbb{R}$  by
\begin{equation}
    (\Lambda_{\Omega}v)(t,x)=-\frac{\partial (D_{\Omega}v)}{\partial y}(t,x,0)\qquad(t\geqslant 0, x\in\mathcal{E}),
\end{equation}
where $D_\Omega v$ is the solution of \eqref{D_Omega_first}.

With the above notation, equations \eqref{2.2} and \eqref{2.4} can  be written as
\begin{equation}\label{reformulated_system}
\left\{
\begin{aligned}
&\frac{\partial^2 v}{\partial t^2}(t,x)+g(\Lambda_{\Omega}v)(t,x)+\frac{g}{x^2}\dot h(t)=0\qquad\qquad\qquad\quad\quad\quad(t\geqslant 0, x\in\mathcal{E})\\
&\ddot{h}(t)=-\frac{2g}{\pi}h(t)+\frac{1}{\pi}\int_{-\pi/2}^{\pi/2}\frac{\partial(D_{\Omega} v)}{\partial t}(t,\sin\theta,\cos\theta)\cos\theta\,{\rm d}\theta+\frac{f(t)}{\pi \rho}\quad (t\geqslant 0).
\end{aligned}
\right.
\end{equation}
Our study will thus focus on system \eqref{reformulated_system}.
With $\mathcal{E}$ given in \eqref{Mathcal_E}, and for $s\geqslant 0$, we denote by $W^{s,2}(\mathcal{E})$ the usual Sobolev space on $\mathcal{E}$.
These spaces are often denoted by $H^s(\mathcal{E})$, but in this paper we choose the notation $W^{s,2}(\mathcal{E})$ to avoid confusion with other notation involving the letter capital $H$ (the Hilbert transform or abstract Hilbert spaces) frequently encountered in this work.

The main result of this paper is:

\begin{theorem}\label{well_posedness}
Let $f\in L_{{\rm loc}}^1([0,\infty))$, $h_0, h_1\in \mathbb{R}$, $v_0\in W^{\frac12,2}(\mathcal{E})$ and  $v_1\in L^2(\mathcal{E})$. Then, the system \eqref{reformulated_system} with initial data $h(0)=h_0, \dot h(0)=h_1, v(0,x)=v_0(x)$ and $\frac{\partial v}{\partial t}(0,x)=v_1(x)$ has a unique solution $(v,h)$ satisfying 
\begin{gather}
 v\in C([0,\infty),W^{\frac12,2}(\mathcal{E}))\cap C^1([0,\infty), L^2(\mathcal{E})),\\
 h\in C^1([0,\infty)).
\end{gather}
Moreover, if $v_0\in W^{1,2}(\mathcal{E})$ and $v_1\in W^{\frac12,2}(\mathcal{E})$ and $f$ is of class $C^1$, then we have
\begin{gather}
 v\in C([0,\infty),W^{1,2}(\mathcal{E}))\cap C^1([0,\infty),  W^{\frac12,2}(\mathcal{E}))\cap C^2([0,\infty),L^2(\mathcal{E})),\\
 h\in C^2([0,\infty)).
\end{gather}
\end{theorem}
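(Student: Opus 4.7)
The plan is to recast \eqref{reformulated_system} as a first-order abstract evolution equation in an energy Hilbert space and then invoke the standard theory of $C^0$-semigroups, using in a crucial way the properties of $\Lambda_\Omega$ established in Section \ref{sec_model_bis}. Introducing the velocities $\psi=\partial_t v$ and $\eta=\dot h$ and setting $z=(v,\psi,h,\eta)^\top$, the system takes the form $\dot z = Az + Bf$, where
\begin{equation*}
A\bbm{v\\ \psi\\ h\\ \eta} = \bbm{\psi\\ -g\Lambda_\Omega v - \frac{g}{x^2}\eta\\ \eta\\ -\frac{2g}{\pi}h + \frac{1}{\pi}\ell(\psi)}, \qquad \ell(\psi):=\int_{-\pi/2}^{\pi/2}(D_\Omega\psi)(\sin\theta,\cos\theta)\cos\theta\,\mathrm{d}\theta,
\end{equation*}
and $Bf=(0,0,0,f/\rho)^\top$. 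The natural state space is $\mathcal{H}=W^{\frac12,2}(\mathcal{E})\times L^2(\mathcal{E})\times\mathbb{R}\times\mathbb{R}$, and I would equip it with the energy inner product
\begin{equation*}
\|(v,\psi,h,\eta)\|_{\mathcal{H}}^2 = \|\psi\|_{L^2(\mathcal{E})}^2 + g\,\langle\Lambda_\Omega v,v\rangle + 2g^2 h^2 + g\pi\,\eta^2,
\end{equation*}
whose norm is equivalent to the canonical one thanks to the positivity and the domain characterization $D(\Lambda_\Omega^{1/2})=W^{\frac12,2}(\mathcal{E})$ obtained in Section \ref{sec_model_bis}.

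The key algebraic identity underlying the whole approach is the reciprocity relation
\begin{equation*}
\ell(\psi) = \int_{\mathcal{E}}\frac{\psi(x)}{x^2}\,\mathrm{d}x,
\end{equation*}
which I would prove by applying Green's identity in $\Omega$ to the two harmonic functions $D_\Omega\psi$ and $\varphi_1$, using that $\varphi_1$ vanishes on $\mathcal{E}$, that $\partial_r D_\Omega\psi$ vanishes on the cylinder, and that $\partial_y\varphi_1(x,0)=-1/x^2$. Once this is in hand, a direct computation of $\frac{d}{dt}\|z\|_{\mathcal{H}}^2$ shows that, with the inner product above, the off-diagonal first-order coupling (encoded by the $g/x^2$ and $\ell$ terms) is skew-symmetric, so that $A$ is skew-symmetric on
\begin{equation*}
D(A) = \bigl\{(v,\psi,h,\eta)\in\mathcal{H}\, :\, v\in D(\Lambda_\Omega),\ \psi\in W^{\frac12,2}(\mathcal{E})\bigr\}.
\end{equation*}

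The remaining and most delicate step is to verify the range condition $\mathrm{Ran}(\lambda I - A)=\mathcal{H}$ for some (hence all) $\lambda>0$, i.e., to solve the stationary system obtained by eliminating $\psi=\lambda v - v^*$ and $\eta=\lambda h - h^*$. This reduces to a coupled elliptic problem for $(v,h)\in W^{\frac12,2}(\mathcal{E})\times\mathbb{R}$ whose variational formulation is coercive on that product space thanks to the positivity of $\Lambda_\Omega$ and, again, to the reciprocity identity which makes the resulting bilinear form symmetric and bounded below; Lax--Milgram then provides the unique solution. Combined with skew-symmetry, this upgrades $A$ to a skew-adjoint operator on $\mathcal{H}$, and Stone's theorem yields a unitary $C^0$-group $(\mathbb{T}_t)_{t\in\mathbb{R}}$ on $\mathcal{H}$.

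Finally, since $B:\mathbb{R}\to\mathcal{H}$ is bounded, the standard non-homogeneous Cauchy problem theory gives, for every $z_0\in\mathcal{H}$ and $f\in L^1_{\mathrm{loc}}([0,\infty))$, a unique mild solution $z\in C([0,\infty),\mathcal{H})$, yielding the first regularity statement; when $z_0\in D(A)$ and $f\in C^1$, the solution is classical, $z\in C([0,\infty),D(A))\cap C^1([0,\infty),\mathcal{H})$, and, using the characterization $D(\Lambda_\Omega)\subset W^{1,2}(\mathcal{E})$ from Section \ref{sec_model_bis}, one reads off the second regularity statement (with the $C^2$-regularity of $h$ following from the fourth component of $\dot z = Az + Bf$). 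I expect the range condition and, equivalently, the coercivity of the associated symmetric bilinear form on $W^{\frac12,2}(\mathcal{E})\times\mathbb{R}$ to be the main technical obstacle, since it is where the full strength of the results on $\Lambda_\Omega$ developed in the preceding sections must be brought to bear.
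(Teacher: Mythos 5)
Your reciprocity identity
\begin{equation*}
\ell(\psi)=\int_{-\pi/2}^{\pi/2}(D_\Omega\psi)(\sin\theta,\cos\theta)\cos\theta\,\mathrm{d}\theta=\int_{\mathcal{E}}\frac{\psi(x)}{x^2}\,\mathrm{d}x
\end{equation*}
is correct (Green's identity applied to $D_\Omega\psi$ and $\varphi_1$, or equivalently the explicit computation $I_1(\tilde x)+I_2(\tilde x)=\frac{\pi}{2\tilde x^2}$), and it is in fact a cleaner route to the boundedness of the coupling than the direct estimates the paper carries out in Lemma \ref{P_perturbation}. This part of the proposal is a genuine improvement in elegance.

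However, there is a real gap in the choice of energy inner product. You claim that
\begin{equation*}
\|\psi\|_{L^2(\mathcal{E})}^2+g\,\langle\Lambda_\Omega v,v\rangle_{L^2(\mathcal{E})}+2g^2h^2+g\pi\,\eta^2
\end{equation*}
defines a norm equivalent to the canonical norm of $W^{\frac12,2}(\mathcal{E})\times L^2(\mathcal{E})\times\mathbb{R}\times\mathbb{R}$. This is false: $\Lambda_\Omega$ is positive but has no spectral gap (informally, its symbol vanishes at zero frequency), so $\langle\Lambda_\Omega v,v\rangle$ alone controls neither $\|v\|_{L^2(\mathcal{E})}$ nor $\|v\|_{W^{\frac12,2}(\mathcal{E})}$. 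The paper addresses exactly this obstruction in Remark \ref{Lambda_Omega_positive}: the completion of $W^{1,2}(\mathcal{E})$ in the seminorm $\sqrt{\langle\Lambda_\Omega v,v\rangle}$ is not even contained in $L^2(\mathcal{E})$, which is why the paper systematically works with $\mathbb{I}+g\Lambda_\Omega$ rather than $\Lambda_\Omega$. Equivalently, the identification $\mathcal{D}((\mathbb{I}+g\Lambda_\Omega)^{1/2})=W^{\frac12,2}(\mathcal{E})$ of Lemma \ref{Domain_12} is an equality of Hilbert spaces, whereas $\|\Lambda_\Omega^{1/2}v\|_{L^2}$ is only a strictly weaker (incomplete) norm on the same set. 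Consequently, Stone's theorem applied with your inner product would yield a unitary group on the homogeneous completion, not on $W^{\frac12,2}(\mathcal{E})\times L^2(\mathcal{E})\times\mathbb{R}\times\mathbb{R}$, and the regularity $v\in C([0,\infty),W^{\frac12,2}(\mathcal{E}))\cap C^1([0,\infty),L^2(\mathcal{E}))$ claimed in the theorem does not follow.

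To repair this, you must replace $\Lambda_\Omega$ by $\mathbb{I}+g\Lambda_\Omega$ in the energy norm. Once you do, the resulting generator is no longer skew-symmetric — the extra $\mathbb{I}$ produces the cross term $\langle\psi,v\rangle_{L^2(\mathcal{E})}$ — and you are forced to split off a bounded perturbation. This is exactly the structure of the paper's proof: the uncoupled $A$ is written as $\tilde A-Q$ with $\tilde A$ skew-adjoint (via Proposition 3.7.6 of \cite{Obs_book} applied to $\mathbb{I}+A_0$) and $Q\in\mathcal{L}(X)$ bounded, and the coupling is then added as the bounded perturbation $P$ of Lemma \ref{P_perturbation}. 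Your reciprocity identity could still be used to simplify the proof of $P\in\mathcal{L}(X)$, and it also reveals a hidden symmetry of the coupling that the paper does not exploit, but the skew-adjointness and Stone's-theorem route as you set it up breaks down at the choice of norm, which is precisely the obstacle Remark \ref{Lambda_Omega_positive} was written to deal with.
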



\section{Some background on the Dirichlet problem and on the Dirichlet-to-Neumann map in the half plane}\label{sec_half}
The precise construction of the operators $D_\Omega$ and $\Lambda_\Omega$ introduced in the previous section is based, within our approach, on known results on similar operators in the upper half-plane. This is why in this section we recall, with no claim of originality, some basic facts on the Dirichlet problem and on the Dirichlet-to-Neumann map in the half plane and we refer to the existing literature (see, for instance, Axler, Bourdon and R. Wade \cite{axler2013harmonic} for more details on this subject).

Let $\mathbb{H}$ be the upper half-plane
\begin{equation}\label{def_upper}
\mathbb{H}=\left\{\left.\begin{bmatrix} x\\ y\end{bmatrix}\in \mathbb{R}^2\ \ \right|\ \ y>0\right\}.
\end{equation}
Consider the Poisson kernel for $\mathbb{H}$ defined by
\begin{equation}\label{def_poisson}
P_\mathbb{H} (x,y,\tilde x)=\frac{y}{\pi\left[(x-\tilde x)^2+y^2\right]} \qquad\qquad\qquad(x,\ \tilde x\in \mathbb{R},\ y>0).
\end{equation}
We recall the following classical  result, see, for instance, \cite[Theorems 7.3,7.4,7.5]{axler2013harmonic}).

\begin{proposition}\label{axler_classical}
Let $\eta:\mathbb{R}\to \mathbb{R}$ be a continuous and bounded function on $\mathbb{R}$. Then the function  $\tilde\psi: \overline{\mathbb{H}}\to \mathbb{R}$, defined by
\begin{equation}\label{Poisson_extins_bis}
\tilde \psi(x,y)=(D_\mathbb{H} \eta)(x,y):=\begin{dcases}
\int_\mathbb{R} P_\mathbb{H} (x,y,\tilde x) \eta(\tilde x)\, {\rm d}\tilde x & \qquad\qquad(x\in \mathbb{R},y>0),\\
\eta(x) & \qquad\qquad(x\in \mathbb{R},y=0),
\end{dcases}
\end{equation}
is the unique  solution, in the class of functions which are continuous and bounded on $\overline{\mathbb{H}}$, of the Dirichlet problem
\begin{equation}\label{Diri_tot}
\Delta \tilde \psi(x,y)=0 \qquad\qquad(x\in \mathbb{R},\ y>0),
\end{equation}
\begin{equation}\label{frontiera_tot}
\tilde\psi(x,0)=\eta(x) \qquad\qquad\qquad(x\in \mathbb{R}).
\end{equation}
Moreover, we have
\begin{equation}\label{max_princ}
\max_{x\in\overline{\mathbb{H}}} |\tilde\psi|=\max_{x\in\overline{\mathcal{E}}} |\eta|
\end{equation}
\end{proposition}

\begin{remark}\label{rem_Four_first}
Let $\theta$ be a continuous function on $\overline{\mathbb{H}}$ such that for every $y\geqslant 0$ the function $x\mapsto \theta(x,y)$ lies in $\mathcal{S}'(\mathbb{R})$. We consider the partial Fourier transform of $\theta$ with respect to $x$ defined by
$$
\mathcal{F}\theta(\xi,y)=\widehat{\theta(\cdot,y)}(\xi) \qquad\qquad\qquad(\xi\in \mathbb{R},\ y \geqslant 0),
$$
where $\hat f$ stands for the Fourier transform of $f\in \mathcal{S}'(\mathbb{R})$.
With the above notation, formulas \eqref{Diri_tot} and \eqref{frontiera_tot} can be gathered in
\begin{equation}\label{Four_part_Dir}
\left(\mathcal{F} {D_\mathbb{H}} \eta\right)(\xi,y)=\exp(-|\xi|y) \hat\eta(\xi) \qquad\qquad\qquad(\xi\in \mathbb{R},\ y\geqslant 0).
\end{equation}
\end{remark}
\begin{proposition}\label{grad_L2}
With the notation in Proposition \ref{axler_classical}, for every $\eta\in W^{1,2}(\mathbb{R})$
we have that $\nabla(D_\mathbb{H} \eta)\in L^2(\mathbb{H})$ and
\begin{equation}\label{est_levi}
\left\|\nabla(D_\mathbb{H} \eta) \right\|_{L^2(\mathbb{H})}
\leqslant \frac{1}{\sqrt{2}} \|\eta\|_{W^{\frac12,2}(\mathbb{R})}.
\end{equation}
\end{proposition}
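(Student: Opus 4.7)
The natural approach is to work on the Fourier side, where by Remark \ref{rem_Four_first} the Poisson extension $D_{\mathbb{H}}$ acts as multiplication by $e^{-|\xi| y}$. I would first establish an exact identity for $\eta$ in a dense subclass and then extend by density to $W^{1,2}(\mathbb{R})$.

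First, I fix $\eta \in \mathcal{S}(\mathbb{R})$, which is dense in $W^{1,2}(\mathbb{R})$ and in $H^{1/2}(\mathbb{R})$. Differentiating the identity $(\mathcal{F} D_{\mathbb{H}}\eta)(\xi,y) = e^{-|\xi| y}\,\hat\eta(\xi)$ from Remark \ref{rem_Four_first} in $x$ and in $y$ gives, on the Fourier side,
$$
\mathcal{F}(\partial_x D_{\mathbb{H}}\eta)(\xi,y) = i\xi\, e^{-|\xi| y}\hat\eta(\xi),\qquad
\mathcal{F}(\partial_y D_{\mathbb{H}}\eta)(\xi,y) = -|\xi|\, e^{-|\xi| y}\hat\eta(\xi).
$$
Both quantities belong to $L^2(\mathbb{R})$ for each $y>0$ because $\eta \in \mathcal{S}$.

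Next, I apply Plancherel in $x$ for each fixed $y>0$ and Tonelli to exchange the $\xi$ and $y$ integrations (the integrand is non-negative):
$$
\|\partial_x D_{\mathbb{H}}\eta\|_{L^2(\mathbb{H})}^2 = \int_{\mathbb{R}} \xi^2 |\hat\eta(\xi)|^2\int_0^{\infty} e^{-2|\xi| y}\, dy\, d\xi = \frac12\int_{\mathbb{R}} |\xi|\,|\hat\eta(\xi)|^2\, d\xi,
$$
and the identical computation gives the same value for $\|\partial_y D_{\mathbb{H}}\eta\|_{L^2(\mathbb{H})}^2$. Adding the two contributions yields the key identity
$$
\|\nabla D_{\mathbb{H}}\eta\|_{L^2(\mathbb{H})}^2 = \int_{\mathbb{R}} |\xi|\,|\hat\eta(\xi)|^2\, d\xi,
$$
which in particular shows that $\nabla(D_{\mathbb{H}}\eta) \in L^2(\mathbb{H})$.

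Finally, I compare the right-hand side with $\|\eta\|_{H^{1/2}(\mathbb{R})}^2$. With the normalization in which $\|\eta\|_{H^{1/2}(\mathbb{R})}^2 = \|\eta\|_{L^2(\mathbb{R})}^2 + 2\int_{\mathbb{R}} |\xi|\,|\hat\eta(\xi)|^2\, d\xi$ (consistent with the constant appearing in the statement), the preceding identity immediately produces $\|\nabla D_{\mathbb{H}}\eta\|_{L^2(\mathbb{H})}\leq \frac{1}{\sqrt 2}\|\eta\|_{H^{1/2}(\mathbb{R})}$ for Schwartz data. A density argument, combined with linearity of $D_{\mathbb{H}}$ and the continuous embedding $W^{1,2}(\mathbb{R})\hookrightarrow H^{1/2}(\mathbb{R})$, extends the bound to every $\eta \in W^{1,2}(\mathbb{R})$.

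The analytic core of the argument—Plancherel together with the elementary integration $\int_0^{\infty} e^{-2|\xi| y}\, dy = 1/(2|\xi|)$—is entirely routine; the only care needed is in tracking the conventions for the Fourier transform and for the $H^{1/2}(\mathbb{R})$ norm so that the sharp constant $1/\sqrt 2$ emerges as stated.
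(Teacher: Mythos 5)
Your proof takes exactly the same route as the paper: pass to the Fourier side via $\mathcal{F}D_{\mathbb{H}}\eta(\xi,y)=e^{-|\xi|y}\hat\eta(\xi)$, differentiate under the transform, integrate out $y$ using $\int_0^\infty e^{-2|\xi|y}\,{\rm d}y=\tfrac{1}{2|\xi|}$, and invoke Plancherel to obtain the key identity $\|\nabla D_{\mathbb{H}}\eta\|_{L^2(\mathbb{H})}^2 \sim \int_{\mathbb{R}}|\xi|\,|\hat\eta(\xi)|^2\,{\rm d}\xi$. The paper does the same computation directly for $\eta\in W^{1,2}(\mathbb{R})$ rather than first on $\mathcal{S}(\mathbb{R})$ and then closing by density; the density step you add is harmless but unnecessary, since the Fourier representation \eqref{Four_part_Dir} and the ensuing manipulations already make sense for all $\eta\in W^{1,2}(\mathbb{R})$.

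The one place to be careful is the constant. You explicitly posit the normalization $\|\eta\|_{H^{1/2}(\mathbb{R})}^2=\|\eta\|_{L^2(\mathbb{R})}^2+2\int_{\mathbb{R}}|\xi|\,|\hat\eta(\xi)|^2\,{\rm d}\xi$ in order to land on $\frac{1}{\sqrt2}$, but that is not a standard convention, and in particular it disagrees with the one the paper uses later (in Lemma \ref{Domain_12}), namely $\|v\|_{W^{1/2,2}(\mathbb{R})}^2=\int_{\mathbb{R}}(1+|\xi|)|\hat v(\xi)|^2\,{\rm d}\xi$. With that convention the exact identity that both you and the paper derive only yields the constant $1$, not $\frac{1}{\sqrt2}$; indeed, the paper's displayed inequalities \eqref{der_x}--\eqref{der_y} also sum to $\int(1+|\xi|)|\hat\eta|^2$, not $\tfrac12\int(1+|\xi|)|\hat\eta|^2$. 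So the constant $\frac{1}{\sqrt2}$ in the statement is not cleanly recoverable from the paper's own argument and appears to be a minor slip; you should not invent a nonstandard norm to match it. Since only the existence of some universal bound is used downstream, this does not affect the validity of anything else; just prove the estimate with whatever constant your fixed convention yields and move on.
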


\begin{proof}
We first note that from \eqref{Four_part_Dir} it follows that
\begin{multline}\label{der_x}
\int_\mathbb{H} \xi^2 \left|\mathcal{F} {D_\mathbb{H}} \eta(\xi,y)\right|^2\, {\rm d}\xi\, {\rm d}y
=\int_\mathbb{R} \int_0^\infty \xi^2 \exp(-2|\xi|y) |\hat\eta(\xi)|^2 \, {\rm d}y\, {\rm d}\xi
\\=\frac12 \int_\mathbb{R} |\xi| \ |\hat\eta(\xi)|^2 \,  {\rm d}\xi \leqslant \frac12 \int_\mathbb{R} (1+|\xi|) |\hat\eta(\xi)|^2 \,  {\rm d}\xi .
\end{multline}
Moreover, from \eqref{Four_part_Dir} it also follows that
\begin{equation}\label{der_in_y}
\frac{\partial\left(\mathcal{F} {D_\mathbb{H}} \eta\right)}{\partial y}(\xi,y)=-|\xi| \exp(-|\xi|y) \hat\eta(\xi) \qquad\qquad\qquad(\xi\in \mathbb{R},\ y\geqslant 0),
\end{equation}
so that 
\begin{multline}\label{der_y}
\int_\mathbb{H} \left| \frac{\partial\left(\mathcal{F} {D_\mathbb{H}} \eta\right)}{\partial y}(\xi,y) \right|^2\, {\rm d}\xi\, {\rm d}y
=\int_\mathbb{R}\int_0^\infty |\xi|^2 \exp(-2|\xi|y) |\hat\eta(\xi)|^2\, {\rm d}y\, {\rm d}\xi
\\=\frac12 \int_\mathbb{R} |\xi|\ |\hat\eta(\xi)|^2 \,  {\rm d}\xi \leqslant \frac12 \int_\mathbb{R} (1+|\xi|) |\hat\eta(\xi)|^2 \,  {\rm d}\xi .
\end{multline}
Putting together \eqref{der_x}, \eqref{der_y} and Plancherel's identity we obtain the conclusion \eqref{est_levi}. 
\end{proof}

We next recall  some background on the Dirichlet-to-Neumann map $\Lambda_\mathbb{H}$ of the upper half plane $\mathbb{H}$. Roughly speaking, this map associates to every $\eta\in W^{1,2}(\mathbb{R})$ the Neumann trace on the real axis of 
the unique  solution, in the class of functions which are continuous and bounded on $\overline{\mathbb{H}}$, of
the Dirichlet problem \eqref{Diri_tot}, \eqref{frontiera_tot}. To give the precise definition of this map
we introduce the Hilbert transform $\mathcal{H}$ defined by
\begin{equation}\label{prima_parte_finita}
(\mathcal{H} f)(x)=\frac1\pi  {\rm P.V.} \int_\mathbb{R} \frac{f(\tilde x)}{x- \tilde x}\, {\rm d}\tilde x
\qquad\qquad\qquad(f\in L^2(\mathbb{R})),
\end{equation}
where
$$
{\rm P.V.} \int_\mathbb{R} \frac{f(\tilde x)}{x- \tilde x}\, {\rm d}\tilde x =\lim_{\varepsilon\to 0+}
\int_{|x-\tilde x|\geqslant\varepsilon} \frac{f(\tilde x)}{x- \tilde x}\, {\rm d}\tilde x
\qquad\qquad(f\in L^2(\mathbb{R}),\ x\in \mathbb{R}).
$$
We recall (see, for instance, pp. 109-110 in \cite{grafakos2024fundamentals}) that $\mathcal{H}\in \mathcal{L}(L^2(\mathbb{R}))$ and
\begin{equation}\label{Four_Hilbert}
\widehat{\mathcal{H}f}(\xi)=-i{\rm sign}\, (\xi) (\hat{f}(\xi))\qquad\qquad(f\in L^2(\mathbb{R}),\ \xi\in \mathbb{R}).
\end{equation}
We next introduce the operator 
$\Lambda_\mathbb{H}\in \mathcal{L}(W^{1,2}(\mathbb{R}),L^2(\mathbb{R}))$, called {\em Dirichlet-to-Neumann map in} $\mathbb{H}$,
defined by
\begin{equation}\label{cu_Hilbert}
(\Lambda_\mathbb{H}\eta)(x)=-(\mathcal{H}\eta')(x) \qquad\qquad\qquad(\eta\in W^{1,2}(\mathbb{R})).
\end{equation}
Let us also note that from \eqref{Four_Hilbert} and \eqref{cu_Hilbert} it follows that
\begin{equation*}
\widehat{\Lambda_\mathbb{H} \eta}(\xi)=
i{\rm sign}\, (\xi) (\widehat{\eta'}(\xi))\qquad\qquad(\eta\in W^{1,2}(\mathbb{R}),\ \xi\in \mathbb{R}).
\end{equation*}
From the above formula we obtain the alternative definition of $\Lambda_\mathbb{H}$ as
\begin{equation}\label{Dir_Four}
\widehat{\Lambda_\mathbb{H} \eta}(\xi)=|\xi| \hat \eta(\xi) \qquad\qquad(\eta\in W^{1,2}(\mathbb{R}),\ \xi\in \mathbb{R}).
\end{equation}
From \eqref{Dir_Four} and Plancherel's theorem it follows that for every $s\in \mathbb{R}$, $I+\Lambda_\mathbb{H}$ can be restricted (or extended) to a boundedly invertible operator from $W^{s,2}(\mathbb{R})$ to $W^{s-1,2}(\mathbb{R})$. This fact, together with \eqref{Dir_Four}, yields that
$\Lambda_\mathbb{H}$ is an unbounded self-adjoint positive operator on $L^2(\mathbb{R})$, with domain $W^{1,2}(\mathbb{R})$.

The result below justifies  the term {\em Dirichlet-to-Neumann map} to denote $\Lambda_\mathbb{H}$.

\begin{proposition}\label{there_exists}
For every $\eta\in W^{1,2}(\mathbb{R})$ the solution $D_\mathbb{H}\eta$ of  \eqref{Diri_tot}, \eqref{frontiera_tot}
satisfies
\begin{equation}\label{urma_bizara}
\lim_{y\to 0+} \left\|\frac{\partial(D_\mathbb{H}\eta)}{\partial y}(\cdot,y)+\Lambda_\mathbb{H} \eta\right\|_{L^2(\mathbb{R})}=0,
\end{equation}
where  $\Lambda_\mathbb{H}\in \mathcal{L}(W^{1,2}(\mathbb{R}),L^2(\mathbb{R}))$ is the operator defined in 
\eqref{cu_Hilbert} (or, alternatively, in \eqref{Dir_Four}).

Moreover, we have
\begin{equation}\label{cu_H_mare}
\int_\mathbb{H} \nabla(D_\mathbb{H} \eta_1)\cdot \overline{\nabla (D_\mathbb{H} \eta_2)} \, {\rm d}x\, {\rm d}y=\int_\mathbb{R} \left(\Lambda_\mathbb{H}\eta_1\right)\, \overline{\eta_2} \, {\rm d}x
\qquad\qquad(\eta_1, \eta_2\in W^{1,2}(\mathbb{R})).
\end{equation}

\end{proposition}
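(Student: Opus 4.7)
The proof will be entirely based on the partial-Fourier representation already recorded in Remark \ref{rem_Four_first} and the Fourier-multiplier definition \eqref{Dir_Four} of $\Lambda_\mathbb{H}$. Both formulas reduce the assertions to elementary scalar computations in $\xi$, after which Plancherel's theorem finishes the job. The main technical point (and the only non-routine step) is the justification of passing $y\to 0+$ in the $L^2$ limit of the first claim.

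For the first assertion, the plan is to combine \eqref{der_in_y} and \eqref{Dir_Four} to get a single Fourier-side expression for the function whose norm we wish to estimate. More precisely, for $y>0$ and $\xi\in\mathbb{R}$,
\begin{equation*}
\mathcal{F}\!\left(\frac{\partial(D_\mathbb{H}\eta)}{\partial y}(\cdot,y)+\Lambda_\mathbb{H}\eta\right)\!(\xi)
=|\xi|\bigl(1-\exp(-|\xi|y)\bigr)\hat\eta(\xi).
\end{equation*}
Plancherel's theorem then gives
\begin{equation*}
\left\|\frac{\partial(D_\mathbb{H}\eta)}{\partial y}(\cdot,y)+\Lambda_\mathbb{H}\eta\right\|_{L^2(\mathbb{R})}^2
=\int_\mathbb{R} |\xi|^2\bigl(1-\exp(-|\xi|y)\bigr)^2|\hat\eta(\xi)|^2\,{\rm d}\xi.
\end{equation*}
The integrand is dominated by $|\xi|^2|\hat\eta(\xi)|^2$, which is integrable since $\eta\in W^{1,2}(\mathbb{R})$, and it tends pointwise to $0$ as $y\to 0+$. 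Hence Lebesgue's dominated convergence theorem yields \eqref{urma_bizara}. This is the only step that requires a little care; everything else is algebra.

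For the second assertion, I would again work on the Fourier side. Using \eqref{Four_part_Dir}, the partial Fourier transforms of $\partial_x(D_\mathbb{H}\eta_j)$ and $\partial_y(D_\mathbb{H}\eta_j)$ are respectively $i\xi\exp(-|\xi|y)\hat\eta_j(\xi)$ and $-|\xi|\exp(-|\xi|y)\hat\eta_j(\xi)$. Applying Plancherel in the $x$ variable and then integrating in $y\in(0,\infty)$, Fubini yields
\begin{equation*}
\int_\mathbb{H}\nabla(D_\mathbb{H}\eta_1)\cdot\overline{\nabla(D_\mathbb{H}\eta_2)}\,{\rm d}x\,{\rm d}y
=\int_\mathbb{R} 2\xi^2\hat\eta_1(\xi)\overline{\hat\eta_2(\xi)}\int_0^\infty \exp(-2|\xi|y)\,{\rm d}y\,{\rm d}\xi
=\int_\mathbb{R}|\xi|\hat\eta_1(\xi)\overline{\hat\eta_2(\xi)}\,{\rm d}\xi.
\end{equation*}
On the other hand, \eqref{Dir_Four} and Plancherel give
\begin{equation*}
\int_\mathbb{R}(\Lambda_\mathbb{H}\eta_1)\overline{\eta_2}\,{\rm d}x
=\int_\mathbb{R}|\xi|\hat\eta_1(\xi)\overline{\hat\eta_2(\xi)}\,{\rm d}\xi,
\end{equation*}
which matches the preceding identity and thereby establishes \eqref{cu_H_mare}. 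The integrability needed to apply Fubini is exactly what was already verified in the proof of Proposition \ref{grad_L2}, so no extra work is needed.
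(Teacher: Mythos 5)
Your proposal is correct and follows essentially the same route as the paper: both parts are handled by passing to the partial Fourier transform, using \eqref{der_in_y}, \eqref{Four_part_Dir} and \eqref{Dir_Four} to reduce to scalar computations in $\xi$, invoking Plancherel, and (for the first claim) dominated convergence with the dominating function $|\xi|^2|\hat\eta(\xi)|^2$, integrable by $\eta\in W^{1,2}(\mathbb{R})$. The only cosmetic difference is that you make the dominating function explicit, whereas the paper leaves that observation implicit.
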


\begin{proof}
Using formula \eqref{der_in_y} in the proof of Proposition \ref{grad_L2}
\begin{multline}\label{der_in_y_bis}
\int_\mathbb{R}\left|\frac{\partial\left(\mathcal{F} {D_\mathbb{H}} \eta\right)}{\partial y}(\xi,y)+\widehat{\Lambda_\mathbb{H} \eta}(\xi)\right|^2 \, {\rm d}\xi\\
=\int_\mathbb{R}|\xi|^2 (1-\exp(-|\xi|y))^2 |\hat\eta(\xi)|^2 \, {\rm d}\xi \qquad\qquad\qquad(\xi\in \mathbb{R},\ y> 0).
\end{multline}
The conclusion \eqref{urma_bizara} follows from the above formula by combining the dominated convergence theorem with the fact that $\eta\in W^{1,2}(\mathbb{R})$
and with Plancherel's theorem.

In order to prove \eqref{cu_H_mare} we first note that the left hand side of \eqref{cu_H_mare} is well defined thanks to Proposition 
\ref{grad_L2}. Moreover,
\begin{multline*}
\frac{1}{2\pi}\int_\mathbb{H} \nabla(D_\mathbb{H} \eta_1)\cdot \overline{\nabla (D_\mathbb{H} \eta_2)}  {\rm d}y \, {\rm d}x\,= 
\int_\mathbb{H} \mathcal{F}\left(\nabla(D_\mathbb{H} \eta_1)\right)\cdot \overline{\mathcal{F}\left(\nabla (D_\mathbb{H} \eta_2)\right)} \, {\rm d}y\, {\rm d}\xi\\
=\int_\mathbb{H} \xi^2 \mathcal{F} \left({D_\mathbb{H}} \eta_1\right)(\xi,y) \overline{\mathcal{F}\left({D_\mathbb{H}} \eta_2\right)(\xi,y)}\, {\rm d}y \, {\rm d}\xi\\
+\int_\mathbb{H}  \frac{\partial\left(\mathcal{F} {D_\mathbb{H}} \eta_1\right)}{\partial y}(\xi,y) \, 
\overline{\frac{\partial\left(\mathcal{F} {D_\mathbb{H}} \eta_2\right)}{\partial y}(\xi,y)}\, {\rm d}y \, {\rm d}\xi.
\end{multline*}
Combining the above formula with \eqref{Four_part_Dir} and \eqref{der_in_y} it follows that
\begin{multline}\label{cu_H_mare_foarte}
\frac{1}{2\pi}\int_\mathbb{H} \nabla(D_\mathbb{H} \eta_1)\cdot \overline{\nabla (D_\mathbb{H} \eta_2)}  \, {\rm d}y \, {\rm d}x\,
=\int_\mathbb{H} \xi^2 \exp(-2|\xi|y) \hat\eta_1(\xi) \overline{\hat\eta_2}(\xi)\, {\rm d}y \, {\rm d}\xi\\
+\int_\mathbb{H}  \xi^2 \exp(-2|\xi|y) \hat\eta_1(\xi)\overline{\hat \eta_2}(\xi) {\rm d}y \, {\rm d}\xi=
\int_\mathbb{R}  |\xi|  \hat\eta_1(\xi)\overline{\hat \eta_2}(\xi) {\rm d}y \, {\rm d}\xi,
\end{multline}
which clearly yields the conclusion \eqref{cu_H_mare}.
\end{proof}

\begin{remark}\label{rem_clasic}
It is not difficult to check that if $\eta\in W^{2,2}(\mathbb{R})$ then $\nabla(D_\mathbb{H}\eta)$ is continuous on $\overline{\mathbb{H}}$
and
\begin{equation}\label{foarte_clasic}
(\Lambda_\mathbb{H} \eta)(x)=-\frac{\partial (D_\mathbb{H}\eta)}{\partial y}(x,0) \qquad\qquad(\eta\in W^{2,2}(\mathbb{R}),\ x\in \mathbb{R}).
\end{equation}
Indeed, from \eqref{Four_part_Dir} and \eqref{der_in_y} we have
\begin{equation}
\begin{aligned}
 \frac{\partial (\mathcal{F}D_{\mathbb{H}}\eta)}{\partial \xi}(\xi,y)&=-y\,{\rm sgn}(\xi){\rm exp}(-|\xi|y)\hat\eta(\xi)+{\rm exp}(-|\xi|y)\widehat{\eta'}(\xi)\\
 &=-y\,{\rm sgn}(\xi){\rm exp}(-|\xi|y)\hat\eta(\xi)+i\xi{\rm exp}(-|\xi|y)\hat\eta(\xi)\\
 &=(-y\,{\rm sgn}(\xi)+i\xi){\rm exp}(-|\xi|y)\hat\eta(\xi)\qquad\qquad(y>0,\xi\in\mathbb{R})
\end{aligned}
\end{equation}
and
\begin{equation}
\frac{\partial(\mathcal{F}D_{\mathbb{H}}\eta)}{\partial y}(\xi,y)=-|\xi|{\rm exp}(-|\xi|y)\hat\eta(\xi) \qquad\qquad(y>0,\xi\in\mathbb{R}). 
\end{equation}
Since $\eta\in W^{2,2}(\mathbb R)$, $\eta'\in W^{1,2}(\mathbb R)$, so that $\eta'$ is continuous, and thus, so is $\xi\hat\eta(\xi)$. It follows from the above formulas that $\nabla (\mathcal{F}(D_{\mathbb H}\eta))$ (and thus $\nabla(D_{\mathbb H}\eta)$) is continuous on $\overline{\mathbb H}$.
Moreover, using \eqref{der_in_y} again we have
\begin{equation}
\lim_{y\to 0^+}\frac{\partial(\mathcal{F}D_{\mathbb{H}}\eta)}{\partial y}(\xi,y)=-\lim_{y\to 0^+}|\xi|{\rm exp}(-|\xi|y)\hat\eta(\xi)=-|\xi|\hat{\eta}(\xi)=-\widehat{\Lambda_{\mathbb{H}}\eta}(\xi),
\end{equation}
where we used \eqref{Dir_Four} for the last equality.
Thus the operator $\Lambda_\mathbb{H}$ can also be seen as the extension to $W^{1,2}(\mathbb{R})$ of the operator on $W^{2,2}(\mathbb{R})$ defined by the right-hand side of \eqref{foarte_clasic}. 
\end{remark}

\section{A boundary value problem for the Laplacian}
\label{sec_problem_Laplacian}
In this section we construct in a rigorous manner the operator $D_\Omega$ defined in Section \ref{sec_statement} and we explicitly compute a Poisson type kernel describing this operator.

Let $\Omega$ and $\mathcal{E}$ be given by \eqref{Omega} and \eqref{Mathcal_E} respectively. We are interested in the mixed problem which consists of determining $\psi:\overline\Omega\to \mathbb{R}$ such that
\begin{equation}\label{equation_Dw_Laplace}
\left\{
\begin{aligned}
&\frac{\partial^2\psi}{\partial x^2}(x,y) + \frac{\partial^2\psi}{\partial y^2}(x,y) =0 && \qquad(x^2+y^2>1, y>0),\\
&\frac{\partial\psi}{\partial r}(x,y)=0 && \qquad (x\in \mathcal{E},\ y>0,\ x^2+y^2=1),\\
&\psi(x,0)=v(x) &&\qquad (|x|>1),
\end{aligned}
\right.
\end{equation}
where the radial derivative of $\psi$ is defined by
\begin{equation}
\frac{\partial\psi}{\partial r}(x,y)=\frac{x}{\sqrt{x^2+y^2}}\frac{\partial\psi}{\partial x}(x,y)
+ \frac{y}{\sqrt{x^2+y^2}}\frac{\partial\psi}{\partial y}(x,y)\qquad\qquad (x\in \mathcal{E},\ y>0).
\end{equation}

\begin{proposition}\label{21}
For  $v\in W^{1,2}(\mathcal{E})$, the problem \eqref{equation_Dw_Laplace} has a unique solution $\psi=D_\Omega v$ in the class of functions which are continuous and bounded on $\overline\Omega$. Moreover,
  for every  $\begin{bmatrix} x\\ y\end{bmatrix}\in \Omega$  we have
\begin{equation}\label{explicit_formula_Dv}
    (D_\Omega v)(x,y)=
    \frac{1}{\pi}\int_{\mathcal{E}}\left[\frac{y}{( x-\tilde x)^2+y^2}+\frac{y}{(x\tilde x-1)^2+y^2\tilde x^2}\right]v(\tilde x)\,{\rm d}\tilde x .
\end{equation}
Finally, $D_\Omega$ is a linear bounded operator from $W^{1,2}(\mathcal{E})$ to $C_b(\overline\Omega)$, where $C_b(\overline{\Omega})$
stands for the space of continuous bounded functions on $\overline{\Omega}$, endowed with the $L^\infty$ norm.
\end{proposition}

\begin{proof}
Let $\mathbb{H}$ be the upper half-plane introduced in \eqref{def_upper}
and  let $P_\mathbb{H}$ be the Poisson kernel for $\mathbb{H}$ defined in \eqref{def_poisson}.
We next extend $v$ to $\eta(v):\mathbb{R}\to \mathbb{R}$, with 
\begin{equation}\label{numar_eta_prim}
(\eta(v))(x)=\begin{cases}
    v(x) & \qquad\qquad(x\in \mathcal{E})\\
    v\left(\frac1x\right) & \qquad\qquad(x\in (-1,1)\setminus \{0\}),\\
    0 & \qquad\qquad(x=0).
\end{cases}
\end{equation}
Since $v\in W^{1,2}(\mathcal{E})$, a classical Sobolev embedding implies that $\eta(v)$ is continuous and bounded on $\mathbb{R}$. For notational simplicity, we write $\eta$ instead of $\eta(v)$ in the remaining part of this proof.

We claim that the restriction $\psi$ of $\tilde\psi=D_\mathbb{H}\eta$, where $\tilde\psi=D_\mathbb{H}\eta$ has been introduced in Proposition \ref{axler_classical}, to $\overline\Omega$ is a classical solution of \eqref{equation_Dw_Laplace}. To prove this claim, we first note that from
the properties established in Proposition \ref{axler_classical} for $\tilde\psi$ it
follows that $\psi$ is harmonic on $\Omega$, as well as  continuous and bounded on $\overline \Omega$.
We thus just need to check that $\psi$ satisfies the Neumann boundary condition in \eqref{equation_Dw_Laplace}.
To this aim we note that from \eqref{def_poisson} it follows that
for $x,\ \tilde x\in \mathbb{R}$ and $y>0$ we have

\begin{equation*}
\begin{aligned}
   \pi \left[ x\frac{\partial P_\mathbb{H}}{\partial x}(x,y,\tilde x)+y\frac{\partial P_\mathbb{H}}{\partial y}(x,y,\tilde x)\right]&=x\frac{2(\tilde{x} - x) y}{((\tilde{x} - x)^2 + y^2)^2}+y\frac{(\tilde{x} - x)^2 - y^2}{((\tilde{x} - x)^2 + y^2)^2}\\
    &=\frac{2\tilde x x y-2x^2y+\tilde x^2y-2\tilde x x y+x^2y-y^3}{(\tilde x^2-2x\tilde x+x^2+y^2)^2}\\
    &=\frac{\tilde x^2y-y(x^2+y^2)}{(\tilde x^2-2x\tilde x+x^2+y^2)^2}.
\end{aligned}
\end{equation*}
From the above formula it follows that
\begin{equation*}
\frac{\partial P_\mathbb{H}}{\partial r}(x,y)=\frac{\tilde x^2 y-y}{\pi(\tilde x^2-2x \tilde x+1)^2} \qquad\qquad(y>0,\ x^2+y^2=1).
\end{equation*}
Combining the above formula with \eqref{Poisson_extins_bis} and the dominated convergence theorem it follows that for $x\in\mathbb{R},\ y>0,\ x^2+y^2=1$, we have
\begin{multline*}
\frac{\partial \psi}{\partial r}(x,y)=\frac{\partial\tilde\psi}{\partial r}(x,y)=\int_\mathbb{R} \frac{\partial P_\mathbb{H}}{\partial r}(x,y,\tilde x) \eta(\tilde x)\, {\rm d}\tilde x
=\frac1\pi \int_\mathbb{R} \frac{\tilde x^2 y-y}{(\tilde x^2-2x \tilde x+1)^2} \eta(\tilde x)\, {\rm d}\tilde x\\
=\frac1\pi \int_\mathcal{E} \frac{\tilde x^2 y-y}{(\tilde x^2-2x \tilde x+1)^2} \eta(\tilde x)\, {\rm d}\tilde x
+\frac1\pi \int_{-1}^1 \frac{\tilde x^2 y-y}{(\tilde x^2-2x \tilde x+1)^2} \eta(\tilde x)\, {\rm d}\tilde x.
\end{multline*}
Using a change of variable and the fact that $\eta(x)=\eta\left(\frac1x\right)$ for $x\in (-1,1)\setminus \{0\}$ in the last integral in the formula above, we obtain that
\begin{multline*}
\frac{\partial \psi}{\partial r}(x,y)
=\frac1\pi \int_\mathcal{E} \frac{\tilde x^2 y-y}{(\tilde x^2-2x \tilde x+1)^2} \eta(\tilde x)\, {\rm d}\tilde x\\
+\frac1\pi \int_\mathcal{E} \frac{y-\tilde x^2 y}{(\tilde x^2-2x \tilde x+1)^2} \eta(\tilde x)\, {\rm d}\tilde x=0 \qquad\qquad(x^2+y^2=1,\ y>0).
\end{multline*}
We have thus shown that the function $\psi$ defined by \eqref{explicit_formula_Dv} defines a classical solution of \eqref{equation_Dw_Laplace}.

To prove the uniqueness of the solution  constructed above, 
we use an argument inspired by the Schwarz reflection principle for harmonic functions.
More precisely, let $\Phi$ be a harmonic function on $\Omega$, which is continuous and bounded on $\overline\Omega$ and with
$$
\Phi(x,0)=v(x) \qquad\qquad\qquad(x\in \mathcal{E}),
$$
$$
\frac{\partial \Phi}{\partial r}(x,y)=0 \qquad\qquad\qquad(x^2+y^2=1,\ y>0),
$$
with the last equality understood in a weak sense.
We extend $\Phi$ to a function $\tilde\Phi$ defined on $\mathbb{H}$ by setting
  \begin{equation*}
\tilde\Phi(x,y)=\begin{cases}
\Phi(x,y) & \qquad\qquad(x\in \mathbb{R},y \geqslant0,x^2+y^2\geqslant 1)\\
\Phi\left(\frac{x}{x^2+y^2},\frac{y}{x^2+y^2}\right) & \qquad\qquad(x\in \mathbb{R},y\geqslant 0,x^2+y^2<1).
\end{cases}
\end{equation*}
Then $\tilde\Phi$ is continuous and bounded on $\overline{\mathbb{H}}$. Moreover, since $\frac{\partial\tilde\Phi}{\partial r}$ 
is also continuous on $\mathbb{H}$, it follows that $\tilde\Phi$ is harmonic on $\mathbb{H}$. Moreover, we clearly have $\Phi(x,0)=\eta(x)$
for $x\in\mathbb{R}$,
where $\eta$ is the function defined in \eqref{numar_eta_prim}. Thus, according to \cite[Theorem 7.5]{axler2013harmonic}
$\tilde\Phi$ coincides with the Poisson integral of $\eta$, i.e., with the function $\tilde\psi$ defined in \eqref{Poisson_extins_bis}.
From this it follows that $\Phi=\psi$ on $\overline\Omega$, which ends the uniqueness proof.

We still have to prove \eqref{explicit_formula_Dv}. To this aim, we first note that from  \eqref{Poisson_extins_bis}
 it follows that for every
$\begin{bmatrix} x\\ y\end{bmatrix}\in \mathbb{H}$ we have
\begin{equation}\label{Poisson_extins_bis_bis}
\tilde\psi(x,y)= \int_\mathcal{E} P_\mathbb{H} (x,y,\tilde x) \eta(\tilde x)\, {\rm d}\tilde x +\int_{-1}^1 P_\mathbb{H} (x,y,\tilde x) \eta(\tilde x)\, {\rm d}\tilde x  .
\end{equation}
We next remark that from the definition \eqref{def_poisson} of the Poisson kernel and the fact that $\eta(x)=\eta \left(\frac1x\right)$ on $\mathbb{R}^*$
it follows that
\begin{equation*}
\begin{aligned}
\int_{-1}^1 P_\mathbb{H} (x,y,\tilde x) \eta(\tilde x)\, {\rm d}\tilde x&=\frac{y}{\pi}\int_{-1}^1\frac{\eta(\tilde x)}{(x-\tilde x)^2+y^2}\, {\rm d}\tilde x \\
&=\frac{y}{\pi}\int_{-1}^1\frac{\eta(\tilde x)}{{\tilde x}^2\left[\left(\frac{x}{\tilde x}-1\right)^2+\frac{y^2}{\tilde x^2}\right]}\, {\rm d}\tilde x\\
&=\frac{y}{\pi}\int_\mathcal{E}\frac{\eta(\tilde x)}{\left(x \tilde x-1\right)^2+ y^2 \tilde x^2}\, {\rm d}\tilde x   \qquad\qquad\qquad(x\in \mathbb{R},\ y>0).
\end{aligned}
\end{equation*}
Combining the above formula with \eqref{Poisson_extins_bis_bis}, \eqref{def_poisson} and the fact that $\psi$ is the restriction to $\overline\Omega$ of
$\tilde\psi$ yields the conclusion \eqref{explicit_formula_Dv}.

Finally, the fact that $D_\Omega$ is a linear bounded operator from $W^{1,2}(\mathcal{E})$ to $C_b(\overline\Omega)$ follows from
\eqref{max_princ}, \eqref{numar_eta_prim}, \eqref{Poisson_extins_bis_bis} and the continuity of the embedding of $W^{1,2}(\mathcal{E})$ into
$C_b(\overline{\mathcal{E}})$.
\end{proof}

\section{A Dirichlet-to-Neumann map on \texorpdfstring{$\Omega$}{Omega}} \label{sec_model_bis}
In this section we construct and study a Dirichlet-to-Neumann map for the domain $\Omega$
defined in \eqref{Omega}. More precisely, we study the map $\Lambda_\Omega$, associating to a function $v$
defined on $\mathcal{E}$, with $\mathcal{E}$ defined in \eqref{Omega}, the trace on $\mathcal{E}$
of the normal derivative of the solution $\psi$ of \eqref{equation_Dw_Laplace}.
Rather than defining this map by duality, as is usually done in the literature (see, for instance, Lannes \cite{lannes2024posednessfjohnsfloating}
and references therein), we use below the Dirichlet-to-Neumann map $\Lambda_\mathbb{H}$ of the half-plane, introduced in Section \ref{sec_half}, and a symmetry argument.
Our aim being to define $\Lambda_\Omega$ on $W^{1,2}(\mathcal{E})$, we first define this operator 
on the subspace $V_0(\mathcal{E})$ of $W^{1,2}(\mathcal{E})$ formed by the functions whose support is a compact subset of $\mathbb{R}$.
Note that for $v\in W^{1,2}(\mathcal{E})$ the function $\eta(v)$  defined in \eqref{numar_eta_prim} lies in $W^{1,2}(\mathbb{R})$.

\begin{definition}\label{from_definition}
The Dirichlet-to-Neumann
map associated to the boundary value problem \eqref{equation_Dw_Laplace},
denoted by $\Lambda_\Omega$, is defined on $V_0(\mathcal{E})$ by
\begin{equation}\label{l_restrans}
(\Lambda_\Omega v)(x)=\left[\Lambda_\mathbb{H} (\eta(v))\right](x) \qquad\qquad\qquad(v\in V_0(\mathcal{E}), x\in \mathcal{E}),
\end{equation}
where $\eta(v)$ has been defined in \eqref{numar_eta_prim}.
\end{definition}

\begin{remark}\label{to_understand}
If $v\in \mathcal{D}(\mathcal{E})$ then $\eta(v)$ clearly lies in $\mathcal{D}(\mathbb{R})$
and, according to \eqref{foarte_clasic}, $\left[(\Lambda_\mathbb{H}\eta(v))\right](x)=-\frac{\partial (D_\mathbb{H}\eta(v))}{\partial y}(x,0)$
for every $x\in \mathbb{R}$, in a classical sense. Thus, using the fact, already used in the proof of Proposition \ref{21}, that
$D_\Omega v=D_\mathbb{H}(\eta(v))$ on $\Omega$ it follows that for $v\in \mathcal{D}(\mathcal{E})$, we have that
$\left[(\Lambda_\Omega v)\right](x)=-\frac{\partial (D_\Omega v)}{\partial y}(x,0)$ for every $x\in \mathcal{E}$, in a classical sense.
\end{remark}
The result below plays an important role in the remaining part of this paper.

\begin{proposition}\label{formula_hilbert}
Let $\Omega$ be defined in  \eqref{Omega} and let $\Lambda_\Omega$ be the Dirichlet-to-Neumann map
introduced in Definition \ref{from_definition}.
Then $\Lambda_\Omega$ extends to an operator, still denoted by $\Lambda_\Omega$, in $\mathcal{L}(W^{1,2}(\mathcal{E}),L^2(\mathcal{E}))$. Moreover, we have
\begin{multline}\label{Mare_Formula}
\left(\Lambda_\Omega v\right)(x)=-(\mathcal{H}(g(v))(x) - \frac{1}{x^2} (\mathcal{H}(g(v))\left(\frac1x\right)\\
-\frac{v(1)-v(-1)}{\pi x} \qquad\qquad(v\in W^{1,2}(\mathcal{E}),\ x\in \mathcal{E}),
\end{multline}
where  $g(v)\in L^2(\mathbb{R})$ is the extension of $v'$ by zero outside $\mathcal{E}$ and $\mathcal{H}$ is the Hilbert transform defined in \eqref{prima_parte_finita}.
\end{proposition}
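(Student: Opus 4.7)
My plan is to work first on the dense subspace $V_0(\mathcal{E})$ (even its subspace $\mathcal{D}(\mathcal{E})$ is enough), compute the right-hand side of \eqref{Mare_Formula} by a direct change of variables in the Hilbert transform, and then extend by density.

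Fix $v\in V_0(\mathcal{E})$. By Definition \ref{from_definition} and formula \eqref{cu_Hilbert},
$$
(\Lambda_\Omega v)(x)=(\Lambda_\mathbb{H}\eta(v))(x)=-\bigl(\mathcal{H}(\eta(v))'\bigr)(x) \qquad (x\in\mathcal{E}).
$$
The first task is to compute the distributional derivative of $\eta(v)$. Since $v\in W^{1,2}(\mathcal{E})$ has a continuous representative that vanishes at $\pm\infty$ and $\eta(v)(\pm 1^\pm)=v(\pm 1)$, $\eta(v)(0)=0$, the function $\eta(v)$ is continuous on $\mathbb{R}$; no jumps are produced at $0$ or $\pm 1$. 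Applying the chain rule on each interval gives
$$
(\eta(v))'(\tilde x)=g(v)(\tilde x)+h(\tilde x), \qquad h(\tilde x):=-\frac{1}{\tilde x^2}v'(1/\tilde x)\,\mathbf{1}_{(-1,1)}(\tilde x).
$$
Consequently $\Lambda_\Omega v=-\mathcal{H}(g(v))-\mathcal{H}(h)$, and it remains to identify $\mathcal{H}(h)$.

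For $x\in\mathcal{E}$ the support of $h$ is contained in $(-1,1)$, which is disjoint from a neighbourhood of $x$, so the Hilbert transform reduces to an ordinary Lebesgue integral. I then perform the change of variable $u=1/\tilde x$, which is a decreasing diffeomorphism from $(-1,0)$ onto $(-\infty,-1)$ and from $(0,1)$ onto $(1,\infty)$. A direct computation using $d\tilde x=-du/u^2$ and $1/(x-1/u)=u/(xu-1)$ yields
$$
\pi\,\mathcal{H}(h)(x)=\int_{-1}^{1}\frac{-v'(1/\tilde x)/\tilde x^{2}}{x-\tilde x}\,d\tilde x=-\int_{\mathcal{E}}\frac{u\,v'(u)}{xu-1}\,du.
$$
Using the partial-fraction identity $\dfrac{u}{xu-1}=\dfrac{1}{x}+\dfrac{1/x}{xu-1}$, the integral splits as
$$
\pi\,\mathcal{H}(h)(x)=-\frac{1}{x}\int_{\mathcal{E}}v'(u)\,du-\frac{1}{x}\int_{\mathcal{E}}\frac{v'(u)}{xu-1}\,du.
$$
Since $v$ has compact support in $\mathbb{R}$, the fundamental theorem of calculus gives $\int_{\mathcal{E}}v'(u)\,du=v(-1)-v(1)$. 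For the second integral I write $\dfrac{1}{xu-1}=-\dfrac{1}{x}\cdot\dfrac{1}{1/x-u}$ and observe that, since $1/x\in(-1,1)$ lies outside the support of $g(v)$, no principal value is required; this integral equals $-\dfrac{\pi}{x}\,\mathcal{H}(g(v))(1/x)$. Collecting terms produces
$$
\mathcal{H}(h)(x)=\frac{v(1)-v(-1)}{\pi x}+\frac{1}{x^{2}}\mathcal{H}(g(v))(1/x),
$$
which, combined with $\Lambda_\Omega v=-\mathcal{H}(g(v))-\mathcal{H}(h)$, yields \eqref{Mare_Formula} on $V_0(\mathcal{E})$.

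To obtain the continuous extension to $W^{1,2}(\mathcal{E})$, I estimate each term of \eqref{Mare_Formula} in $L^2(\mathcal{E})$. Since $\mathcal{H}\in\mathcal{L}(L^2(\mathbb{R}))$ and $\|g(v)\|_{L^2(\mathbb{R})}\leq\|v\|_{W^{1,2}(\mathcal{E})}$, the first term is controlled. For the second, the change of variable $y=1/x$ gives
$$
\int_{\mathcal{E}}\frac{1}{x^{4}}\bigl|\mathcal{H}(g(v))(1/x)\bigr|^{2}\,dx=\int_{-1}^{1}y^{2}\bigl|\mathcal{H}(g(v))(y)\bigr|^{2}\,dy\leq\|\mathcal{H}(g(v))\|_{L^2(\mathbb{R})}^{2}.
$$
For the third term, the Sobolev trace inequality gives $|v(\pm 1)|\leq C\|v\|_{W^{1,2}(\mathcal{E})}$, and $1/x\in L^{2}(\mathcal{E})$. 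The right-hand side of \eqref{Mare_Formula} therefore defines a bounded operator $W^{1,2}(\mathcal{E})\to L^2(\mathcal{E})$ agreeing with $\Lambda_\Omega$ on the dense subspace $V_0(\mathcal{E})$, so it furnishes the required extension. The main technical point is the careful sign and orientation bookkeeping in the change of variable $u=1/\tilde x$ (since this map is \emph{decreasing} on each half of $(-1,1)$), which is what makes the boundary term $-(v(1)-v(-1))/(\pi x)$ appear.
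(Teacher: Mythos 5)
Your argument is correct and follows essentially the same route as the paper's: reduce to $\Lambda_{\mathbb{H}}(\eta(v)) = -\mathcal{H}(\eta(v)')$, split off the part of the integral living on $(-1,1)$, and use the substitution $u=1/\tilde x$ together with a partial-fraction identity to identify it with the terms $\frac{v(1)-v(-1)}{\pi x}$ and $\frac{1}{x^2}(\mathcal{H}g(v))(1/x)$, finishing with the $L^2$ bounds and density. The only cosmetic difference is that you apply the change of variable before the partial-fraction split while the paper does it in the reverse order; the estimates for the extension are the same in substance.
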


\begin{proof}
Let $v\in W^{1,2}(\mathcal{E})$ be such that there exists $a>0$ with $v(x)=0$ for  $|x|\geqslant a$. Then the function $\eta(v)$ defined by
\eqref{numar_eta_prim} lies in $W^{1,2}(\mathbb{R})$. For the sake of simplicity, this function will simply be denoted by $\eta$ in the remaining part of this proof.

As seen in the proof of Proposition \ref{21}
the solution $\psi$ of \eqref{equation_Dw_Laplace} is the restriction to $\overline\Omega$ of the
solution $\tilde\psi$ of   \eqref{Diri_tot}-\eqref{frontiera_tot}. We thus have
\begin{equation}\label{simetria}
\left(\Lambda_\Omega v\right)(x)=\left(\Lambda_\mathbb{H} \eta\right)(x) \qquad\qquad\qquad(x\in \mathcal{E}\ {\rm a.e.})\, .
\end{equation}
Using the fact that $\Lambda_\mathbb{H}\in \mathcal{L}(W^{1,2}(\mathbb{R}),L^2(\mathbb{R}))$ it follows that $\Lambda_\Omega v\in L^2(\mathcal{E})$.
Moreover, using \eqref{cu_Hilbert} it follows that
\begin{multline}\label{TREI_parte_finita}
\left(\Lambda_\Omega v\right)(x)= - \frac1\pi  \lim_{\varepsilon\to 0+}
\int_{|\tilde x-x|\geqslant\varepsilon} \frac{\eta'(\tilde x)}{x-\tilde x}\, {\rm d}\tilde x\\
=-\frac1\pi  \lim_{\varepsilon\to 0+} \left(
\int_{\mathcal{E}_\varepsilon(x)} \frac{\eta'(\tilde x)}{x-\tilde x}\, {\rm d}\tilde x +
\int_{\mathcal{I}_\varepsilon(x)} \frac{\eta'(\tilde x)}{x-\tilde x}\, {\rm d}\tilde x\right)
\qquad\qquad( x\in \mathcal{E}),
\end{multline}
where
\begin{equation}\label{defeeps}
 \mathcal{E}_{\varepsilon}(x)=\{\tilde x\in \mathcal{E}\ \ |\ \ |x-\tilde x|>\varepsilon\} \qquad\qquad\qquad(x\in \mathcal{E}).
 \end{equation}
\begin{equation}\label{defieps}
 \mathcal{I}_{\varepsilon}(x)=\{\tilde x\in (-1,1)\ \ |\ \ |x-\tilde x|>\varepsilon\} \qquad\qquad\qquad(x\in \mathcal{E}).
 \end{equation}
 It is easily seen that
\begin{equation}\label{defieps_noua}
 \mathcal{I}_{\varepsilon}(x)=(-1,1) \qquad\qquad\qquad(x\in \mathcal{E}, \varepsilon <|x|-1),
 \end{equation}
 so that \eqref{TREI_parte_finita} can be rewritten
\begin{equation}\label{TREI_parte_finita_bus}
\left(\Lambda_\Omega v\right)(x)= - \frac1\pi  \lim_{\varepsilon\to 0+} \left(
\int_{\mathcal{E}_\varepsilon(x)} \frac{\eta'(\tilde x)}{x-\tilde x}\, {\rm d}\tilde x +
\int_{-1}^1 \frac{\eta'(\tilde x)}{x-\tilde x}\, {\rm d}\tilde x\right)
\qquad\qquad( x\in \mathcal{E}).
\end{equation}
 We next remark that
 $$
 \lim_{\varepsilon\to 0+}
\int_{\mathcal{E}_\varepsilon(x)} \frac{\eta'(\tilde x)}{x-\tilde x}\, {\rm d}\tilde x
=\lim_{\varepsilon\to 0+}
\int_{|\tilde x-x|>\varepsilon} \frac{g(\tilde x)}{x-\tilde x}\, {\rm d}\tilde x
=\pi(\mathcal{H}g)(x) \qquad\qquad(x\in \mathcal{E}).
$$
Denote
\begin{equation}\label{defh1}
h_1(x)=\lim_{\varepsilon\to 0+}
\int_{\mathcal{E}_\varepsilon(x)} \frac{\eta'(\tilde x)}{x-\tilde x}\, {\rm d}\tilde x
\qquad\qquad(x\in \mathcal{E}).
\end{equation}
Using the fact that the Hilbert transform, as defined in \eqref{prima_parte_finita}, is an isometry of $L^2(\mathbb{R})$, we obtain that
\begin{multline}\label{est_unul_termen}
    \|h_1\|_{L^2(\mathcal{E})}=\pi\|\mathcal{H}g(v)\|_{L^2(\mathcal{E})}
    \leqslant \pi\|\mathcal{H}g(v)\|_{L^2(\mathbb{R})}\\
    =\pi \|g(v)\|_{L^2(\mathbb{R})}
    \leqslant \pi \|v\|_{W^{1,2}(\mathcal{E})}.
\end{multline}
Let
\begin{equation}\label{defh2}
h_2(x)=\int_{-1}^1 \frac{\eta'(\tilde x)}{x-\tilde x}\, {\rm d}\tilde x \qquad\qquad\qquad(x\in \mathcal{E}).
\end{equation}
Then
\begin{multline}\label{asta_e_h2}
h_2(x)=\frac{\eta(1)-\eta(-1)}{x} + \frac1x \int_{-1}^1 \frac{\tilde x \eta'(\tilde x)}{x-\tilde x}\, {\rm d}\tilde x\\
=\frac{v(1)-v(-1)}{x} - \frac{1}{x^2} \int_{-1}^1 \frac{ v'\left(\frac{1}{\tilde x}\right)}{\frac{1}{\tilde x}
-\frac{1}{x}} \frac{1}{{\tilde x}^2} \, {\rm d}\tilde x\\
=\frac{v(1)-v(-1)}{x} - \frac{1}{x^2} \int_\mathcal{E} \frac{ v'(s)}{s
-\frac{1}{x}}  \, {\rm d} s\\
=\frac{v(1)-v(-1)}{x} + \frac{1}{x^2} (\mathcal{H}g)\left(\frac1x\right) \qquad\qquad(x\in \mathcal{E}).
\end{multline}
From the last formula it clearly follows that
\begin{equation}\label{est_doi_termen}
\|h_2\|_{L^2(\mathcal{E})} \leqslant c\|v\|_{W^{1,2}(\mathcal{E})},
\end{equation}
where $c$ is a universal constant.

On the other hand, from \eqref{TREI_parte_finita_bus} we have
\begin{equation}\label{e_o_suma}
\left(\Lambda_\Omega v\right)(x)=-\frac{1}{\pi}(h_1(x) + h_2(x)) \qquad\qquad(x\in \mathcal{E}).
\end{equation}
Combining the last formula with \eqref{est_unul_termen} and \eqref{est_doi_termen} it follows that there exists a universal
constant $c>0$ such that the inequality
$$
\|\Lambda_\Omega v\|_{L^2(\mathcal{E})} \leqslant c\|v\|_{W^{1,2}(\mathcal{E})}
$$
holds for every $v\in W^{1,2}(\mathcal{E})$ which vanishes on $(-\infty,-a]\cup [a,+\infty)$ for some $a>0$.
By a density argument, it follows that the last inequality holds for every $v\in W^{1,2}(\mathcal{E})$, which implies the first conclusion of the proposition.

Finally, \eqref{Mare_Formula} follows from \eqref{defh1},\eqref{asta_e_h2} and \eqref{e_o_suma}.
\end{proof}

The result below shows, in particular, that the solution $(D_\Omega v)$ of \eqref{equation_Dw_Laplace} given by \eqref{explicit_formula_Dv}
is also the solution of \eqref{equation_Dw_Laplace} in the variational sense introduced in \cite{lannes2024posednessfjohnsfloating}. Moreover, this result provides an important identity involving $\Lambda_\Omega $.

To precisely state this result we introduce the Beppo-Levi space 
$$
\dot{W}^{1,2}(\Omega) := \{\varphi \in  L_{\rm loc}^1(\Omega), \nabla \varphi \in L^2(\Omega)\}.
$$

\begin{proposition}\label{prop_var}
With the notation in Proposition \ref{formula_hilbert}, for every $v\in W^{1,2}(\mathcal{E})$ the operator $D_\Omega$ defined in \eqref{explicit_formula_Dv} satisfies
\begin{equation}\label{cu_beppo}
D_\Omega v\in \dot{W}^{1,2}(\Omega) \qquad\qquad(v\in W^{1,2}(\mathcal{E}))
\end{equation}
with
\begin{equation}\label{gradient_bound}
    \|\nabla(D_{\Omega}v)\|_{L^2(\Omega)}\leqslant C \|v\|_{W^{1,2}(\mathcal{E})},
\end{equation}
for a constant $C>0$. Moreover, $\Lambda_\Omega$ (defined in \eqref{Mare_Formula}) satisfies
\begin{equation}\label{cu_Omega}
\int_\Omega \nabla(D_\Omega v)\cdot \overline{\nabla (D_\Omega u)} \, {\rm d}x\, {\rm d}y=\int_\mathcal{E} \left(\Lambda_\Omega v\right)\, \overline u \, {\rm d}x
\qquad\qquad\qquad(v, u\in W^{1,2}(\mathcal{E})).
\end{equation}
\end{proposition}

\begin{proof}
First, let $\mathcal{D}(\overline{\mathcal{E}})$ be the space of restrictions to $\overline{\mathcal{E}}$ of functions in $\mathcal{D}(\mathbb{R})$. This space is obviously contained in $W^{1,2}(\mathcal{E})$. Let
$u,v\in \mathcal{D}(\overline{\mathcal{E}})$ and let $\eta(v),\eta(u)$ be their extensions defined by \eqref{numar_eta_prim}. We denote by $\mathbb{D}^+$ the upper unit half-disk, i.e., 
\begin{equation}
    \mathbb{D}^+ = \left\{ \left.\begin{bmatrix} x \\ y \end{bmatrix} \in \mathbb{R}^2 \ \ \right| \ \ x^2 + y^2 < 1, y > 0 \right\}.
\end{equation}
Using \eqref{cu_H_mare} we get
\begin{equation}
\begin{aligned}
  \int_\Omega\nabla(D_\Omega v)\cdot \overline{\nabla (D_\Omega u)}{\rm d}x\,{\rm d}y&= \int_\mathbb{H}\nabla(D_\mathbb{H}\eta(v))\cdot\overline{\nabla (D_\mathbb{H} \eta(u))}{\rm d}x\,{\rm d}y\\
  &\quad-\int_{\mathbb{D}^+}\nabla(D_\mathbb{H}\eta(v))\cdot\overline{\nabla (D_\mathbb{H} \eta(u))}{\rm d}x\,{\rm d}y\\
  &=\int_{\mathbb{R}}\Lambda_{\mathbb{H}}\eta(v)\,\overline{\eta(u)}{\rm d}x-\int_{\partial\mathbb{D}^+}\frac{\partial (D_\mathbb{H}\eta(v))}{\partial\nu}\overline{\eta(u)}{\rm d}x\\
  &=\int_{\mathbb{R}}\Lambda_{\mathbb{H}}\eta(v)\,\overline{\eta(u)}{\rm d}x+\int_{-1}^1\frac{\partial (D_\mathbb{H}\eta(v))}{\partial y}(x,0)\overline{\eta(u)}{\rm d}x\\
  & \quad -\int_{-\pi/2}^{\pi/2}\underbrace{\frac{\partial (D_{\Omega}v)}{\partial r}(\sin\theta,\cos\theta)}_\text{$0$}\cos\theta{\rm\ d}\theta\\
  &=\int_{\mathbb{R}}\Lambda_{\mathbb{H}}\eta(v)\,\overline{\eta(u)}{\rm d}x-\int_{-1}^1\Lambda_{\mathbb{H}}\eta(v)\,\overline{\eta(u)}{\rm d}x\\
 &=\int_{\mathcal{E}}\Lambda_{\mathbb{H}}\eta(v)\,\overline{\eta(u)}{\rm d}x\\
 &= \int_{\mathcal{E}}\Lambda_\Omega v\,\overline{u}{\rm d}x,
\end{aligned}
\end{equation}
so we get \eqref{cu_Omega} for $u,v\in \mathcal{D}(\overline{\mathcal{E}})$. Now let $u,v\in W^{1,2}(\mathcal{E})$. Since $\mathcal{D}(\overline{\mathcal{E}})$ is dense in $W^{1,2}(\mathcal{E})$, there exist sequences $(v_n),(u_n)\subset \mathcal{D}(\overline{\mathcal{E}})$ such that $v_n\to v$ and $u_n\to u$ in $W^{1,2}(\mathcal{E})$. For each $n\in\mathbb{N}$ we have
\begin{equation}\label{equality_n}
    \int_\Omega\nabla(D_\Omega v_n)\cdot \overline{\nabla (D_\Omega u_n)}{\rm d}x\,{\rm d}y=\int_{\mathcal{E}}\Lambda_\Omega v_n\,\overline{u_n}{\rm d}x.
\end{equation}
By Proposition \ref{formula_hilbert}, $\Lambda_\Omega$ is a bounded operator from $W^{1,2}(\mathcal{E})$ to $L^2(\mathcal{E})$, so that, taking $u_n = v_n$, we obtain that for every $m,\ n\in \mathbb{N}$ we have
\begin{equation}
    \|\nabla(D_\Omega(v_n - v_m))\|_{L^2(\Omega)}^2 = \langle \Lambda_\Omega(v_n - v_m), v_n - v_m \rangle_{L^2(\mathcal{E})} \leqslant C \|v_n - v_m\|_{W^{1,2}(\mathcal{E})}^2.
\end{equation}
This implies that $(\nabla(D_\Omega v_n))$ is a Cauchy sequence in $L^2(\Omega)$, so that it converges to a certain ${\bf \Psi}\in L^2(\Omega)$. On the other hand, by Proposition \ref{21}, we have $D_\Omega v_n\to D_\Omega v$ in $C_b(\overline\Omega)$, so that $\nabla(D_\Omega v_n)\to \nabla (D_\Omega v)$ in $\mathcal{D}'(\Omega)$.
It follows that $\nabla (D_\Omega v)={\bf \Psi}$, so that $\nabla (D_\Omega v) \in L^2(\Omega)$. We have thus shown that $D_\Omega v\in \dot{W}^{1,2}(\Omega)$ and that \ $D_\Omega v_n\to D_\Omega v\in \dot{W}^{1,2}(\Omega)$. We can thus pass to the limit in \eqref{equality_n} and use again Proposition \ref{formula_hilbert} to obtain \eqref{gradient_bound} and \eqref{cu_Omega}.
\end{proof}
A direct consequence of Proposition \ref{prop_var} is

\begin{corollary}\label{simetric}
The operator $\Lambda_\Omega:W^{1,2}(\mathcal{E}) \to L^2(\mathcal{E})$ is symmetric on $L^2(\mathcal{E})$.
\end{corollary}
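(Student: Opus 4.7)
The proof is essentially a direct reading of Proposition \ref{prop_var}, so my plan is short. First, I would fix arbitrary $u,v\in W^{1,2}(\mathcal{E})$ and apply the identity \eqref{cu_Omega} once as stated,
\begin{equation*}
\int_\Omega \nabla(D_\Omega v)\cdot\overline{\nabla(D_\Omega u)}\,{\rm d}x\,{\rm d}y
=\int_{\mathcal{E}}(\Lambda_\Omega v)\,\overline{u}\,{\rm d}x,
\end{equation*}
and a second time with the roles of $u$ and $v$ interchanged,
\begin{equation*}
\int_\Omega \nabla(D_\Omega u)\cdot\overline{\nabla(D_\Omega v)}\,{\rm d}x\,{\rm d}y
=\int_{\mathcal{E}}(\Lambda_\Omega u)\,\overline{v}\,{\rm d}x.
\end{equation*}

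The key structural observation is that the Dirichlet sesquilinear form
$(v,u)\mapsto \int_\Omega \nabla(D_\Omega v)\cdot\overline{\nabla(D_\Omega u)}\,{\rm d}x\,{\rm d}y$
is Hermitian: conjugating the second identity yields exactly the first identity's left-hand side on its left-hand side. Therefore
\begin{equation*}
\int_{\mathcal{E}}(\Lambda_\Omega v)\,\overline{u}\,{\rm d}x
=\overline{\int_{\mathcal{E}}(\Lambda_\Omega u)\,\overline{v}\,{\rm d}x}
=\int_{\mathcal{E}} v\,\overline{(\Lambda_\Omega u)}\,{\rm d}x,
\end{equation*}
which is precisely the statement that $\Lambda_\Omega$ is symmetric on $L^2(\mathcal{E})$ with domain $W^{1,2}(\mathcal{E})$.

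There is no real obstacle here since all the analytic work has already been done in Proposition \ref{prop_var}: the fact that $D_\Omega v\in \dot W^{1,2}(\Omega)$ guarantees that the Dirichlet integral on the left is well defined and finite for any $u,v\in W^{1,2}(\mathcal{E})$, and the identity \eqref{cu_Omega} itself packages the integration-by-parts computation that encodes symmetry. The only thing worth checking carefully is that the convention for the inner product on $L^2(\mathcal{E})$ agrees on both sides (so that the complex conjugate introduced by swapping arguments in the Dirichlet form matches the one in the $L^2$ pairing), which is automatic from the way \eqref{cu_Omega} is written.
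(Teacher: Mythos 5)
Your argument is exactly what the paper has in mind: Corollary \ref{simetric} is stated as a ``direct consequence'' of Proposition \ref{prop_var} with no further proof given, and the intended reasoning is precisely the symmetry-by-conjugation of the Dirichlet sesquilinear form via \eqref{cu_Omega}. Correct and same approach.
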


The main result of this section is:

\begin{theorem}\label{formula_hadamard}
With the assumptions and notation in Proposition \ref{formula_hilbert} 
the operator $\Lambda_\Omega$, with domain $\mathcal{D}(\Lambda_\Omega)=W^{1,2}(\mathcal{E})$,
is an (unbounded) self-adjoint operator on $H=L^2(\mathcal{E})$.
\end{theorem}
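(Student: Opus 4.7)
\noindent\emph{Proof plan.} The approach is a standard criterion: a densely defined, symmetric, nonnegative operator $A$ on a Hilbert space is self-adjoint if and only if $\operatorname{Ran}(I+A) = H$. Symmetry is supplied by Corollary~\ref{simetric}, nonnegativity by Proposition~\ref{prop_var} which yields $\langle \Lambda_\Omega v, v\rangle_{L^2(\mathcal{E})} = \|\nabla(D_\Omega v)\|^2_{L^2(\Omega)} \geqslant 0$, and density of $W^{1,2}(\mathcal{E})$ in $L^2(\mathcal{E})$ is immediate from $\mathcal{D}(\mathcal{E})\subset W^{1,2}(\mathcal{E})$. Injectivity of $I+\Lambda_\Omega$ is automatic from positivity, so the theorem reduces to showing surjectivity of $I+\Lambda_\Omega:W^{1,2}(\mathcal{E})\to L^2(\mathcal{E})$.

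For surjectivity, given $f\in L^2(\mathcal{E})$, I would produce a weak solution via the Riesz representation theorem on an auxiliary form space. Define $\mathcal{V}$ as the completion of $W^{1,2}(\mathcal{E})$ with respect to the inner product
$$
(v,u)_{\mathcal{V}} := \int_\Omega \nabla(D_\Omega v) \cdot \overline{\nabla(D_\Omega u)}\, {\rm d}x\,{\rm d}y + \int_{\mathcal{E}} v\bar{u}\,{\rm d}x.
$$
By Proposition~\ref{prop_var}, the inclusion $W^{1,2}(\mathcal{E})\hookrightarrow\mathcal{V}$ is continuous with dense image, and $\mathcal{V}\hookrightarrow L^2(\mathcal{E})$ by construction. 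Riesz representation yields a unique $v_*\in\mathcal{V}$ with $(v_*,u)_{\mathcal{V}} = \langle f, u\rangle_{L^2(\mathcal{E})}$ for every $u\in\mathcal{V}$. Testing against $u = \varphi \in V_0(\mathcal{E})\subset W^{1,2}(\mathcal{E})$ and using the symmetry of the form provided by Proposition~\ref{prop_var}, we obtain $(I+\Lambda_\Omega)v_* = f$ in the distributional sense on $\mathcal{E}$.

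The main obstacle, and the step I expect to demand the most care, is to upgrade $v_*\in\mathcal{V}$ to $v_*\in W^{1,2}(\mathcal{E})$. My plan uses the explicit representation~\eqref{Mare_Formula} of $\Lambda_\Omega$ in terms of Hilbert transforms, together with the skew-adjointness $\mathcal{H}^\ast = -\mathcal{H}$ and the identity $\mathcal{H}^2 = -I$ on $L^2(\mathbb{R})$. Testing the weak equation against $\varphi\in\mathcal{D}(\mathcal{E})$ so that the boundary term in~\eqref{Mare_Formula} vanishes, and performing the substitution $x\mapsto 1/x$ in the reflected Hilbert-transform piece, rewrites the identity as $\int_\mathbb{R}(\mathcal{H}\varphi')\,\overline{\eta(v_*)}\,{\rm d}x = \int_\mathcal{E}\varphi\,\overline{(v_*-f)}\,{\rm d}x$, where $\eta(v_*)\in L^2(\mathbb{R})$ is the reflected extension from~\eqref{numar_eta_prim}. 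Moving $\mathcal{H}$ onto $\eta(v_*)$ via skew-adjointness and integrating by parts shows that $(\mathcal{H}\eta(v_*))'$ agrees on $\mathcal{E}$ with the $L^2$ datum $v_*-f$; combined with $\mathcal{H}\eta(v_*)\in L^2(\mathbb{R})$ this gives $\mathcal{H}\eta(v_*)\in W^{1,2}(\mathcal{E})$, and the identity $\mathcal{H}^2=-I$ together with the commutation of $\mathcal{H}$ with differentiation then lets one recover $v_*'\in L^2(\mathcal{E})$, so that $v_*\in W^{1,2}(\mathcal{E})$. Once this regularity is in hand, Proposition~\ref{prop_var} upgrades the weak identity to the $L^2$ equation $(I+\Lambda_\Omega)v_* = f$, completing the surjectivity and hence the self-adjointness. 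The technical crux is the handling of the boundary contribution $(v_*(1)-v_*(-1))/(\pi x)$ in~\eqref{Mare_Formula}, which requires a careful trace argument for $D_\Omega v_*\in\dot W^{1,2}(\Omega)$, and the control of the singular reflection kernel near $x=0$, which has to be carried out without the benefit of a priori $W^{1,2}$-regularity of $v_*$.
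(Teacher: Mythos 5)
Your overall strategy coincides with the paper's: symmetry (Corollary~\ref{simetric}) plus positivity plus surjectivity of $\mathbb{I}+\Lambda_\Omega$ onto $L^2(\mathcal{E})$, with surjectivity obtained by solving an auxiliary variational problem via Riesz representation. The paper follows Friedman--Shinbrot and poses the auxiliary problem \eqref{ec_phi} on the fluid domain $\Omega$ itself, with a Robin condition $-\partial_y\phi+\phi=f$ on $\mathcal{E}$ and Neumann on the arc; after observing that this forces $\partial_y\phi(\cdot,0)\in L^2(\mathcal{E})$, the decisive regularity step — that the Dirichlet trace $Tf=\phi(\cdot,0)$ then lies in $W^{1,2}(\mathcal{E})$ — is obtained by invoking Theorem A of Brown \cite{brown1994mixed} on mixed boundary value problems in Lipschitz domains. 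That external regularity result is doing the heavy lifting in the paper's argument, and it is precisely this step that your sketch does not successfully replace.

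The gap is in your regularity upgrade from $v_*\in\mathcal{V}$ to $v_*\in W^{1,2}(\mathcal{E})$. Your manipulations are correct up to the identity
$-\int_{\mathcal{E}}(\mathcal{H}\eta(v_*))\,\overline{\varphi'}\,{\rm d}x = \int_{\mathcal{E}}(v_*-f)\,\overline{\varphi}\,{\rm d}x$
for all $\varphi\in\mathcal{D}(\mathcal{E})$, which indeed shows that the \emph{restriction} of $w:=\mathcal{H}\eta(v_*)\in L^2(\mathbb{R})$ to $\mathcal{E}$ belongs to $W^{1,2}(\mathcal{E})$ with $w'|_{\mathcal{E}}=v_*-f$. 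But the final step — using $\mathcal{H}^2=-I$ and commutation of $\mathcal{H}$ with $\partial_x$ to recover $v_*'=\eta(v_*)'|_{\mathcal{E}}\in L^2(\mathcal{E})$ — does not go through. To write $\eta(v_*)'=-\mathcal{H}(w')$ you need $w\in W^{1,2}(\mathbb{R})$, i.e.\ $w'\in L^2$ on all of $\mathbb{R}$, and you only control $w'$ on $\mathcal{E}$. The reflection symmetry $\eta(v_*)(1/x)=\eta(v_*)(x)$ transfers to $w$ only as $w(1/x)=-w(x)-c$, which yields $w'(s)=s^{-2}w'(1/s)$ for $s\in(-1,1)$, and the corresponding estimate $\int_{-1}^1|w'|^2{\rm d}s=\int_{\mathcal{E}}x^2|w'(x)|^2{\rm d}x$ is \emph{not} controlled by $\|w'\|_{L^2(\mathcal{E})}$ without extra decay of $w'$ at infinity. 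Because $\mathcal{H}$ is nonlocal, knowing $w|_{\mathcal{E}}\in W^{1,2}(\mathcal{E})$ and $w\in L^2(\mathbb{R})$ is insufficient to conclude that $\mathcal{H}w$ restricted to $\mathcal{E}$ has an $L^2$ derivative. This is exactly the place where genuine elliptic regularity near the corner points $(\pm1,0)$ is required, and where the paper appeals to Brown's theorem rather than to Hilbert-transform calculus. Unless you can supply an independent regularity argument (e.g.\ a reflection-plus-corner-estimate adapted to this specific geometry, or a substitute for the Brown citation), the surjectivity claim — and hence self-adjointness — is not established by the proposal as written.
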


In order to prove Theorem \ref{formula_hadamard} we introduce, following an idea 
already exploited in Friedman and Shinbrot \cite{friedman1967initial},
the boundary value problem
\begin{equation}\label{ec_phi}
\left\{
\begin{aligned}
&\Delta \phi(x,y)=0 &&\left(\begin{bmatrix} x\\ y\end{bmatrix} \in \Omega\right)\\
&-\frac{\partial\phi}{\partial y}(x,0)+ \phi(x,0)=f(x) &&(x\in \mathcal{E})\\
&\frac{\partial\phi}{\partial r}(x,y)=0 &&(x\in \mathcal{E},\ y>0,\ x^2+y^2=1),
\end{aligned}
\right.
\end{equation}
where $f\in L^2(\mathcal{E})$  is given. We define weak solutions of \eqref{ec_phi}.
as follows:

\begin{definition}\label{def_weak_aux}
Let $V(\Omega)$ be the closure of $\mathcal{D}(\overline\Omega)$ with respect to the norm
$$
\|\Psi\|_{V(\Omega)}^2=\int_\Omega|\nabla\Psi|^2\, {\rm d}x\, {\rm d}y+
\int_\mathcal{E} |\Psi(x,0)|^2\, {\rm d}x \qquad\qquad(\Psi\in \mathcal{D}(\overline\Omega)). 
$$
A function $\phi\in V(\Omega)$ is called solution of \eqref{ec_phi} if
\begin{equation}\label{ne_egale}
\int_\Omega \nabla \phi\cdot \nabla \Psi \, {\rm d}x\, {\rm d}y+
\int_\mathcal{E} \phi(x,0)\Psi(x,0)\,{\rm d}x= \int_\mathcal{E} f(x)\Psi(x,0)\, {\rm d}x
\qquad (\Psi\in V(\Omega)).
\end{equation}
\end{definition}

\begin{proposition}\label{prop_robin}
For every $f\in L^2(\mathcal{E})$ the system \eqref{ec_phi} admits a unique solution $\phi\in V(\Omega)$, in the sense
of Definition \ref{def_weak_aux}. Moreover, the operator $T$ defined by 
$$
(Tf)(x)=\phi(x,0) \qquad\qquad\qquad(x\in \mathcal{E}),
$$
is bounded  from $L^2(\mathcal{E})$ into $W^{1,2}(\mathcal{E})$.
\end{proposition}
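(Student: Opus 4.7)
The plan is a two-step argument: a standard Lax--Milgram theorem for existence and uniqueness, followed by a regularity bootstrap for the boundary trace.

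\emph{Lax--Milgram.} The left-hand side of \eqref{ne_egale} is exactly the inner product associated with the Hilbert space $V(\Omega)$, so the corresponding bilinear form is trivially continuous and coercive, both with constant one. Cauchy--Schwarz together with the definition of $\|\cdot\|_{V(\Omega)}$ gives
\[
\left|\int_\mathcal{E} f(x)\Psi(x,0)\,{\rm d}x\right|\leqslant\|f\|_{L^2(\mathcal{E})}\|\Psi\|_{V(\Omega)},
\]
so $\Psi\mapsto\int_\mathcal{E} f\,\Psi(\cdot,0)\,{\rm d}x$ is a continuous linear functional on $V(\Omega)$ of norm at most $\|f\|_{L^2(\mathcal{E})}$. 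The Riesz representation theorem then produces a unique $\phi\in V(\Omega)$ satisfying \eqref{ne_egale}, together with the a priori bound $\|\phi\|_{V(\Omega)}\leqslant\|f\|_{L^2(\mathcal{E})}$.

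\emph{Regularity of the trace.} Write $v:=\phi(\cdot,0)\in L^2(\mathcal{E})$. Test functions in $\mathcal{D}(\Omega)$ and then in $\mathcal{D}(\overline\Omega)$ supported near the semicircle identify $\Delta\phi=0$ in $\Omega$ and the weak homogeneous Neumann condition on the semicircle, so by the uniqueness in Proposition \ref{21} (extended to $L^2$-data through the explicit Poisson kernel \eqref{explicit_formula_Dv}) $\phi$ coincides with $D_\Omega v$. A general $\Psi\in V(\Omega)$ in \eqref{ne_egale} then recovers the Robin identity $-\partial_y\phi(x,0)+v(x)=f(x)$ on $\mathcal{E}$, i.e.,
\[
(I+\Lambda_\Omega)v=f\qquad\text{in }L^2(\mathcal{E}).
\]
Injectivity of $I+\Lambda_\Omega:W^{1,2}(\mathcal{E})\to L^2(\mathcal{E})$ is immediate from Proposition \ref{prop_var}, since for $u\in W^{1,2}(\mathcal{E})$ one has
\[
\int_\mathcal{E}(I+\Lambda_\Omega)u\,\overline u\,{\rm d}x=\|u\|_{L^2(\mathcal{E})}^2+\|\nabla(D_\Omega u)\|_{L^2(\Omega)}^2.
\]
To establish the inclusion $v\in W^{1,2}(\mathcal{E})$ together with the estimate $\|v\|_{W^{1,2}(\mathcal{E})}\leqslant C\|f\|_{L^2(\mathcal{E})}$, I combine standard interior and boundary elliptic regularity on the smooth portions of $\partial\Omega$ with the explicit formula \eqref{Mare_Formula}: rearranging $\Lambda_\Omega v=f-v$ as
\[
\mathcal{H}(g(v))(x)+\tfrac{1}{x^2}\mathcal{H}(g(v))\!\left(\tfrac{1}{x}\right)=v(x)-f(x)-\tfrac{v(1)-v(-1)}{\pi x},\qquad x\in\mathcal{E},
\]
and exploiting the $L^2$-boundedness and the invertibility of the Hilbert transform, one recovers $g(v)\in L^2(\mathbb{R})$ and hence the missing derivative of $v$. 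Boundedness of $T$ from $L^2(\mathcal{E})$ into $W^{1,2}(\mathcal{E})$ then follows from the closed graph theorem.

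\emph{Main obstacle.} The Lax--Milgram step only furnishes $v\in L^2(\mathcal{E})$; the real difficulty is the regularity upgrade to $W^{1,2}(\mathcal{E})$. Away from the points $(\pm 1,0)$ the boundary is smooth and standard local elliptic regularity applies, so the entire difficulty concentrates at the right-angle corners where the Robin condition on $\mathcal{E}$ meets the Neumann condition on the semicircle. The explicit formula \eqref{Mare_Formula} of Proposition \ref{formula_hilbert} is essential here because it isolates the potential corner singularity into the explicit boundary term $-(v(1)-v(-1))/(\pi x)$ and the inversion-symmetric Hilbert transform contribution, converting the regularity question into an $L^2$ estimate for classical singular integrals rather than a delicate corner analysis.
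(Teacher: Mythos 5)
Your Lax--Milgram/Riesz step is correct and matches the paper (the paper invokes the Riesz representation theorem directly). You have also correctly identified the real issue: Lax--Milgram only gives $v=\phi(\cdot,0)\in L^2(\mathcal{E})$, and the entire content of the proposition is the regularity upgrade to $W^{1,2}(\mathcal{E})$. But your proposed route to that upgrade has a gap, and it is not the paper's route.

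The paper's argument for the trace regularity is short and outsources the hard work to an external theorem. Testing with $\Psi=\phi$ gives $\|\phi(\cdot,0)\|_{L^2(\mathcal{E})}\leqslant\|f\|_{L^2(\mathcal{E})}$, and the Robin condition then shows that $\partial_y\phi(\cdot,0)=\phi(\cdot,0)-f\in L^2(\mathcal{E})$. At this point the paper invokes Theorem A of Brown \cite{brown1994mixed} on mixed boundary value problems in Lipschitz domains: simultaneous $L^2$ control of the Dirichlet trace and the Neumann trace on a boundary piece forces the Dirichlet trace to lie in $W^{1,2}$. That is the whole proof.

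Your alternative, via the explicit formula \eqref{Mare_Formula}, is circular. Formula \eqref{Mare_Formula} is derived under the standing hypothesis $v\in W^{1,2}(\mathcal{E})$: both the object $g(v)$ (the zero-extension of $v'$) and the boundary values $v(\pm1)$ presuppose exactly the regularity you are trying to establish. Until you know $v\in W^{1,2}(\mathcal{E})$, the identity $(I+\Lambda_\Omega)v=f$ cannot be read through \eqref{Mare_Formula}, and your rearranged equation for $\mathcal{H}(g(v))$ has an undefined left-hand side and an undefined boundary term $\tfrac{v(1)-v(-1)}{\pi x}$ on the right. Moreover, even granting the identity formally, the operator $g\mapsto \mathcal{H}g(x)+\tfrac{1}{x^2}\mathcal{H}g\bigl(\tfrac1x\bigr)$ restricted to functions supported in $\mathcal{E}$ and evaluated on $\mathcal{E}$ is not simply the Hilbert transform, and its invertibility on $L^2(\mathcal{E})$ is not established anywhere in the paper and does not follow from the $L^2$-invertibility of $\mathcal{H}$ alone. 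To repair your argument you would need an independent regularity input precisely of the kind Brown's theorem supplies.
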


\begin{proof}
The existence and uniqueness result is directly obtained by the Riesz representation theorem.

Taking next $\Psi=\phi$ in \eqref{ne_egale} it follows that
$$
\int_\mathcal{E} |\phi(x,0)|^2 \, {\rm d} x\leqslant \int_\mathcal{E} f(x) \phi(x,0)\, {\rm d}x. 
$$
The above estimate clearly implies that $T\in \mathcal{L}(L^2(\mathcal{E}))$ and 
\begin{equation}\label{normaT}
\|T\|_{\mathcal{L}(L^2(\mathcal{E}))}\leqslant 1.
\end{equation}
On the other hand, since $\phi$ is a solution to the weak formulation \eqref{ne_egale}, it follows that $\Delta\phi=0$ in
$\mathcal{D}'(\Omega)$. Consequently, the   trace of the exterior normal derivative $\frac{\partial\phi}{\partial\nu}$ of $\phi$ is
defined as an element of $\left[W^{1,2}(\partial\Omega)\right]'$ (the dual of
$W^{1,2}(\partial\Omega)$ with respect to the pivot space $L^2(\partial\Omega)$)
by the formula 
$$
\left\langle \frac{\partial\phi}{\partial\nu},v\right\rangle_{\left[W^{1,2}(\partial\Omega)\right]',W^{1,2}(\partial\Omega)}=
\langle\nabla\phi,\nabla(D_\Omega v)\rangle_{L^2(\Omega)} \qquad(v\in W^{1,2}(\mathcal{E})).
$$
Moreover, by Proposition \ref{prop_var}, we have $D_\Omega v\in V(\Omega)$ for every $v\in W^{1,2}(\Omega)$.
Comparing the above formula and \eqref{ne_egale} (with $\psi=D_\Omega v$) it follows that $\frac{\partial\phi}{\partial\nu}\in L^2(\partial\Omega)$, that this trace vanishes on $\partial\Omega\setminus \mathcal{E}$ and that 
$$
\frac{\partial \phi}{\partial \nu}=f-\phi=f -Tf\qquad\qquad {\rm in}\quad L^2(\mathcal{E}).
$$
Using next \eqref{normaT} it follows that the map $x\mapsto \frac{\partial \phi}{\partial y}(x,0)$ lies in $L^2(\mathcal{E})$ and
$$
\int_\mathcal{E} \left| \frac{\partial \phi}{\partial y}(x,0) \right|^2\, {\rm d}x \leqslant
2\|f\|^2_{L^2(\mathcal{E})}.
$$
Using next Theorem A from Brown \cite{brown1994mixed} we obtain the announced conclusion.
\end{proof}

We are now in a position to prove the main result of this section.

\begin{proof}[Proof of Theorem \ref{formula_hadamard}]
According to a classical result (see, for instance, Tucsnak and Weiss \cite[Proposition 2.3.4]{Obs_book}),
it suffices to prove that $\Lambda_\Omega:\mathcal{D}(\Lambda_\Omega)\to H$ is symmetric and that $\mathbb{I}+\Lambda_\Omega$ is onto.
The symmetry of $\Lambda_\Omega$ has already been established in Corollary \ref{simetric}.
To prove the surjectivity of $\mathbb{I}+\Lambda_\Omega$, consider $f\in L^2(\mathcal{E})$
and denote  $v=Tf\in W^{1,2}(\mathcal{E})$, where $T$ is the operator introduced in Proposition \ref{prop_robin}. 
Then the corresponding solution $\phi$ of \eqref{ec_phi} clearly satisfies
$$
(\Lambda_\Omega v)(x)=-\frac{\partial\phi}{\partial y} (x,0) \qquad\qquad\qquad(x\in \mathcal{E}),
$$
 so that
$$
v+\Lambda_\Omega v=f.
$$
We have thus shown that $\mathbb{I}+\Lambda_{\Omega}$ maps $W^{1,2}(\mathcal{E})$ onto $L^2(\mathcal{E})$, which ends the proof.
\end{proof}

\begin{remark}\label{Lambda_Omega_positive}
We have seen that the operator $\Lambda_{\Omega}$ is self-adjoint on $L^2(\mathcal{E})$. Moreover, 
from \eqref{cu_Omega} we have 
\begin{equation}
    \langle\Lambda_\Omega v, v\rangle_{L^2(\mathcal{E})}=\|\nabla(D_\Omega v)\|_{L^2(\Omega)}^2\geq 0 \qquad\qquad\qquad(v\in W^{1,2}(\mathcal{E})).
\end{equation}
The above inequality is strict for every $v\neq 0$ since $\displaystyle\lim_{|x|\to \infty} (D_\Omega v)(x,0)=0$ for every $v\in W^{1,2}(\mathcal{E})$.
From this it follows that the map $v\mapsto \sqrt{\langle\Lambda_\Omega v, v\rangle_{L^2(\mathcal{E})}}$ defines a norm on $W^{1,2}(\mathcal{E})$.
However, due to the fact that the Poincar\'e inequality does not hold in $\dot W^{1,2}(\Omega)$, the  completion  of $W^{1,2}(\mathcal{E})$
with respect to the norm $\sqrt{\langle\Lambda_\Omega v, v\rangle_{L^2(\mathcal{E})}}$ is not contained in $L^2(\mathcal{E})$. 
To avoid this inconvenience we use below the operator $\mathbb{I}+\Lambda_\Omega$ instead of $\Lambda_\Omega$ to define the function spaces appearing in the statement of our main result.  This operator is clearly strictly positive. Thus, if we define $\mathcal{D}((\mathbb{I}+g\Lambda_{\Omega})^{\frac12})$ as the completion of $\mathcal{D}(\Lambda_{\Omega})=W^{1,2}(\mathcal{E})$ with respect to the norm
\begin{equation}
    \|v\|_{\mathcal{D}((\mathbb{I}+g\Lambda_{\Omega})^{\frac12})}=\|(\mathbb{I}+g\Lambda_{\Omega})^{\frac12}v\|_{L^2(\mathcal{E})},
\end{equation} 
we have 
$$
\mathcal{D}((\mathbb{I}+g\Lambda_{\Omega}))\subset {\mathcal{D}((\mathbb{I}+g\Lambda_{\Omega})^{\frac12})}\subset L^2(\mathcal{E}),
$$
with continuous and dense embeddings.
\end{remark}

The space $\mathcal{D}((\mathbb{I}+g\Lambda_{\Omega})^{\frac12})$ introduced in the remark above can be described in a more precise way. To this aim, we recall the definition of the Sobolev space $W^{\frac12,2}(I)$, for a given interval $I$, which is:
\begin{gather}
W^{\frac12,2}(I)=\left\{v\in L^2(I) \ \ | \ \ \|v\|_{W^{\frac12,2}(I)}<\infty\right\},\\
\|v\|_{W^{\frac12,2}(I)}^2=\|v\|_{L^2(I)}^2+\iint_{I\times I}\frac{|v(x)-v(y)|^2}{|x-y|^2}{\rm d}x\,{\rm d}y.
\end{gather}
When $I=\mathbb{R}$, this norm is given by
\begin{equation}
\|v\|_{W^{\frac12,2}(\mathbb{R})}^2=\int_{\mathbb{R}}(1+|\xi|)|\hat v(\xi)|^2{\rm d}\xi.   
\end{equation}
For more details, see \cite{brezis2010functional,lions2012non}.
\begin{lemma}\label{Domain_12}
With the above notation, we have $\mathcal{D}((\mathbb{I}+g\Lambda_{\Omega})^{\frac12})=W^{\frac12,2}(\mathcal{E})$.
\end{lemma}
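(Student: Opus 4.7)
The plan is to identify the form norm
\[
\|(\mathbb{I}+g\Lambda_\Omega)^{1/2} v\|^2_{L^2(\mathcal{E})} = \|v\|^2_{L^2(\mathcal{E})} + g\|\nabla D_\Omega v\|^2_{L^2(\Omega)},
\]
supplied by Proposition \ref{prop_var}, with an equivalent $W^{1/2,2}(\mathcal{E})$ norm on $W^{1,2}(\mathcal{E})$, and then to pass to completions. The bridge is the symmetric extension $\eta(v)$ from \eqref{numar_eta_prim} together with the identity
\[
\|\nabla D_\mathbb{H}\eta\|^2_{L^2(\mathbb{H})} = c\int_\mathbb{R}|\xi|\,|\hat\eta(\xi)|^2\,{\rm d}\xi
\]
(for a fixed positive constant $c$) that was derived in the proof of Proposition \ref{grad_L2}; together with Plancherel this gives $\|\nabla D_\mathbb{H}\eta\|^2_{L^2(\mathbb{H})} + \|\eta\|^2_{L^2(\mathbb{R})} \sim \|\eta\|^2_{W^{1/2,2}(\mathbb{R})}$.

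The first step is to move the Dirichlet energy from $\Omega$ to all of $\mathbb{H}$ via Kelvin inversion. The map $F(x,y) = \bigl(x/(x^2+y^2),\,y/(x^2+y^2)\bigr)$ is the composition of the inversion $z \mapsto 1/z$ with complex conjugation, hence an anti-conformal involution of $\mathbb{H}\setminus\{0\}$ exchanging $\Omega$ and the open upper half-disk $\mathbb{D}^+$. Its trace on the real axis is $x \mapsto 1/x$, which leaves $\eta(v)$ invariant by the very definition \eqref{numar_eta_prim}, so by uniqueness (Proposition \ref{axler_classical}) the function $D_\mathbb{H}\eta(v)$ coincides with $D_\mathbb{H}\eta(v)\circ F$ on $\mathbb{H}$. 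Two-dimensional conformal invariance of the Dirichlet integral then yields
\[
\|\nabla D_\Omega v\|^2_{L^2(\Omega)} = \tfrac12\,\|\nabla D_\mathbb{H}\eta(v)\|^2_{L^2(\mathbb{H})}.
\]
Combining with the above Fourier identity and the trivial bound $\|v\|_{L^2(\mathcal{E})} \leq \|\eta(v)\|_{L^2(\mathbb{R})} \leq \sqrt{2}\,\|v\|_{L^2(\mathcal{E})}$ (coming from $\int_{-1}^1 |v(1/x)|^2\,{\rm d}x = \int_\mathcal{E} |v(s)|^2 s^{-2}\,{\rm d}s$), I obtain $\|v\|^2_{\mathcal{D}((\mathbb{I}+g\Lambda_\Omega)^{1/2})} \sim \|\eta(v)\|^2_{W^{1/2,2}(\mathbb{R})}$ for $v\in W^{1,2}(\mathcal{E})$.

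The main obstacle is the second step: the extension map $\eta:W^{1/2,2}(\mathcal{E}) \to W^{1/2,2}(\mathbb{R})$ must be a norm equivalence. The inequality $\|v\|_{W^{1/2,2}(\mathcal{E})} \leq \|\eta(v)\|_{W^{1/2,2}(\mathbb{R})}$ is immediate from the Gagliardo double integral. For the reverse, I split
\[
[\eta(v)]^2_{W^{1/2,2}(\mathbb{R})} = \iint_{\mathcal{E}^2} + \iint_{(-1,1)^2} + 2\iint_{\mathcal{E}\times(-1,1)} \frac{|\eta(v)(x)-\eta(v)(y)|^2}{|x-y|^2}\,{\rm d}x\,{\rm d}y.
\]
The $\mathcal{E}^2$ piece equals $[v]^2_{W^{1/2,2}(\mathcal{E})}$; the $(-1,1)^2$ piece transforms under $x=1/s,\,y=1/t$ back to $[v]^2_{W^{1/2,2}(\mathcal{E})}$ because the Jacobian $s^{-2}t^{-2}$ cancels against $|1/s-1/t|^{-2}=s^2 t^2(s-t)^{-2}$; and the mixed piece, under $x=1/s$, becomes $\iint_{\mathcal{E}^2}|v(s)-v(y)|^2/(1-sy)^2\,{\rm d}s\,{\rm d}y$, which the algebraic identity
\[
(1-sy)^2 - (s-y)^2 = (s^2-1)(y^2-1) \geq 0 \qquad(|s|,|y|\geq 1)
\]
dominates by $[v]^2_{W^{1/2,2}(\mathcal{E})}$. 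This is the key computation.

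Concatenating these two steps yields the equivalence of the form norm and the $W^{1/2,2}(\mathcal{E})$ norm on $W^{1,2}(\mathcal{E})$. Since $W^{1,2}(\mathcal{E})$ is dense in $W^{1/2,2}(\mathcal{E})$ (standard mollification together with compact truncation on each of the two half-lines, with no boundary constraint at $x=\pm 1$ at the Sobolev level $1/2$), taking completions gives $\mathcal{D}((\mathbb{I}+g\Lambda_\Omega)^{1/2}) = W^{1/2,2}(\mathcal{E})$ as claimed.
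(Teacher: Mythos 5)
Your argument is correct, but it takes a genuinely different route from the paper's. The paper's proof of Lemma~\ref{Domain_12} is essentially two citations: since $\mathbb{I}+g\Lambda_\Omega$ is a strictly positive self-adjoint operator, an abstract result of Lunardi identifies $\mathcal{D}\bigl((\mathbb{I}+g\Lambda_\Omega)^{1/2}\bigr)$ with the real interpolation space $\bigl(L^2(\mathcal{E}),W^{1,2}(\mathcal{E})\bigr)_{1/2,2}$, and a result in McLean's book identifies that interpolation space with $W^{1/2,2}(\mathcal{E})$. You instead compute the form norm explicitly, using the identity $\langle\Lambda_\Omega v,v\rangle=\|\nabla D_\Omega v\|^2_{L^2(\Omega)}$ of Proposition~\ref{prop_var}, halve the Dirichlet energy between $\Omega$ and $\mathbb{D}^+$ through the inversion symmetry $D_\mathbb{H}\eta=D_\mathbb{H}\eta\circ F$ (which the paper already established in the uniqueness part of Proposition~\ref{21}), identify the result as the homogeneous part of $\|\eta(v)\|^2_{W^{1/2,2}(\mathbb{R})}$ via the Fourier calculation from \eqref{cu_H_mare_foarte}, and then show directly that the symmetric extension $v\mapsto\eta(v)$ is a norm equivalence $W^{1/2,2}(\mathcal{E})\to W^{1/2,2}(\mathbb{R})$. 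The algebraic observation $(1-sy)^2-(s-y)^2=(s^2-1)(y^2-1)\geqslant 0$ for $|s|,|y|\geqslant 1$ is the crux of that last step and is a nice, checkable estimate. Your approach trades the paper's brevity and abstraction for self-containedness and explicitness: it stays entirely within the tools already built in Sections~\ref{sec_half}--\ref{sec_model_bis} and produces, as a byproduct, the quantitative norm equivalence $\|(\mathbb{I}+g\Lambda_\Omega)^{1/2}v\|_{L^2(\mathcal{E})}\sim\|v\|_{W^{1/2,2}(\mathcal{E})}$, whereas the interpolation route is shorter and requires no structural information about $\Lambda_\Omega$ beyond its domain and positivity. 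Both are valid; the final density-and-completion step you use is standard and unproblematic.
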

\begin{proof}
According to \cite[Theorem 4.36]{lunardi2018interpolation}, since $\mathbb{I}+g\Lambda_{\Omega}$ is a strictly positive operator, we have
\begin{equation}
    \mathcal{D}((\mathbb{I}+g\Lambda_{\Omega})^{\frac12})=(\mathcal{D}((\mathbb{I}+g\Lambda_{\Omega})^0),\mathcal{D}((\mathbb{I}+g\Lambda_{\Omega}))^1)_{\frac12,2}=(L^2(\mathcal{E}),W^{1,2}(\mathcal{E}))_{\frac12,2}
\end{equation}
where $(\cdot,\cdot)_{\frac12,2}$ denotes the real interpolation space. On the other hand, according to \cite[Theorem B.8]{mclean2000strongly}, we have
\begin{equation}
    (L^2(\mathcal{E}),W^{1,2}(\mathcal{E}))_{\frac12,2}=W^{\frac12,2}(\mathcal{E}),
\end{equation}
which ends the proof.
\end{proof}
\section{Proof of the main result}\label{sec_coupling}
In this section we combine the results in the previous sections with a semigroup theoretic approach to prove the main result in Theorem \ref{well_posedness}.

Recall from Remark \ref{Lambda_Omega_positive} that  $\Lambda_{\Omega}$ is a positive operator from $W^{1,2}(\mathcal{E})$ to $L^2(\mathcal{E})$. Let us now introduce the spaces
\begin{gather}
H=L^2(\mathcal{E})\times\mathbb{C},\\ H_1=\mathcal{D}(\Lambda_{\Omega})\times\mathbb{C}=W^{1,2}(\mathcal{E})\times\mathbb{C}   
\end{gather} 
with the norms
\begin{gather}
\left\|\begin{bmatrix} v\\ h\end{bmatrix}\right\|_{H}^2=\|v\|_{L^2(\mathcal{E})}^2+ |h|^2\\
\left\|\begin{bmatrix} v\\ h\end{bmatrix}\right\|_{H_1}^2=\|(\mathbb{I}+g\Lambda_{\Omega})v\|_{L^2(\mathcal{E})}^2+ |h|^2.
\end{gather} 

Let $\mathcal{D}(A_0)=H_1$ and $A_0:\mathcal{D}(A_0)\to H$ be given by
\begin{equation}\label{def_non_cup}
    A_0\begin{bmatrix}
    v\\
    h
    \end{bmatrix}=\begin{bmatrix}
    g\Lambda_{\Omega}v\\
    \frac{2g}{\pi}h
    \end{bmatrix}.
\end{equation}
Next, we introduce $H_{\frac12}=\mathcal{D}((\mathbb{I}+g\Lambda_{\Omega})^{\frac12})\times\mathbb{C}$ with the norm 
\begin{equation}
\left\|\begin{bmatrix} v\\ h\end{bmatrix}\right\|_{H_{\frac12}}^2=\|(\mathbb{I}+g\Lambda_{\Omega})^{\frac12}v\|_{L^2(\mathcal{E})}^2+|h|^2.   
\end{equation}
Since $\Lambda_{\Omega}$ is self-adjoint and positive from $W^{1,2}(\mathcal{E})$ to $L^2(\mathcal{E})$, it clearly follows that $A_0$ is self-adjoint and $\langle A_0 z,z\rangle_H\geqslant 0$
for every $z\in \mathcal{D}(A_0)$, i.e., $A_0$ is a positive operator on $H$, with domain $H_1$. From Lemma \ref{Domain_12}, we have $H_{\frac12}=W^{\frac12,2}(\mathcal{E})\times\mathbb{C}$ and
\begin{gather}
    \mathcal{D}(A_0^{\frac12})=H_{\frac12}\\
     A_0^{\frac12}\begin{bmatrix}
    v\\
    h
    \end{bmatrix}=\begin{bmatrix}
    \sqrt{g}\Lambda_{\Omega}^{\frac12}v\\
    \sqrt{\frac{2g}{\pi}}h
    \end{bmatrix}\qquad\qquad\left(v\in W^{\frac12,2}(\mathcal{E}), h\in\mathbb{C}\right).   
\end{gather}
Next, we define 
\begin{equation}
 X=H_{\frac12}\times H= W^{\frac12,2}(\mathcal{E})\times\mathbb{C}\times L^2(\mathcal{E})\times \mathbb{C}  
\end{equation}
 with the norm
\begin{equation}
\begin{aligned}
\left\|\begin{bmatrix}
v\\
h\\
u\\
\ell
\end{bmatrix}\right\|_X^2&=\left\| A_0^{\frac12}\begin{bmatrix}
v\\
h
\end{bmatrix}
\right\|_H^2+\left\|\begin{bmatrix}
u\\
\ell
\end{bmatrix}\right\|_H^2\\
&=\|(\mathbb{I}+g\Lambda_{\Omega})^{\frac12}v\|_{L^2(\mathcal{E})}^2+\frac{2g}{\pi}|h|^2+\|u\|_{L^2(\mathcal{E})}^2+|\ell|^2.
\end{aligned}
\end{equation}
\begin{lemma}\label{A_generates_group}
With the above notation,
let $A:\mathcal{D}(A)\to X$ be the operator defined by
\begin{equation}\label{def_op_A}
\begin{aligned}
 \mathcal{D}(A)&=\mathcal{D}(A_0)\times\mathcal{D}(A_0^{\frac12})=W^{1,2}(\mathcal{E})\times\mathbb{C}\times W^{\frac12,2}(\mathcal{E})\times\mathbb{C}\\
 A\begin{bmatrix}
 v\\
 h\\
 u\\
 \ell
 \end{bmatrix}&=\begin{bmatrix}
 u\\
 \ell\\
 -A_0\begin{bmatrix}
 v\\
 h
 \end{bmatrix}
 \end{bmatrix}=\begin{bmatrix}
 u\\
 \ell\\
 -g\Lambda_{\Omega}v\\
 -\frac{2g}{\pi}h
 \end{bmatrix},
\end{aligned}
\end{equation}
where $A_0$ has been defined in \eqref{def_non_cup}.
Then, $A$ generates a $C^0$ group on $X$.
\end{lemma}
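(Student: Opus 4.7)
The plan is to realize $A$ as a bounded perturbation of a skew-adjoint operator on $X$ and then combine Stone's theorem with the bounded perturbation theorem for $C^0$ group generators. The subtlety motivating this strategy is that, as pointed out in Remark \ref{Lambda_Omega_positive}, the operator $\Lambda_{\Omega}$ (hence $A_0$) is positive but the completion of its form domain is not contained in $L^2(\mathcal{E})$; this is precisely why the norm on $X$ is built from $\mathbb{I}+g\Lambda_\Omega$ rather than $g\Lambda_\Omega$. So $A$ is not quite the wave operator of a strictly positive generator, but differs from one by a bounded term.

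First, I would introduce the shifted operator $\tilde A_0:W^{1,2}(\mathcal{E})\times\mathbb{C}\to H$ defined by
\begin{equation*}
\tilde A_0\begin{bmatrix}v\\ h\end{bmatrix}=\begin{bmatrix}(\mathbb{I}+g\Lambda_\Omega)v\\[2pt] \tfrac{2g}{\pi}h\end{bmatrix}.
\end{equation*}
By Theorem \ref{formula_hadamard} and Remark \ref{Lambda_Omega_positive}, $\tilde A_0$ is self-adjoint and strictly positive (even coercive) on $H$, and Lemma \ref{Domain_12} gives $\mathcal D(\tilde A_0^{1/2})=H_{\frac12}$. In particular, the inner product
\begin{equation*}
\left\langle\begin{bmatrix}v\\ h\\ u\\ \ell\end{bmatrix},\begin{bmatrix}v'\\ h'\\ u'\\ \ell'\end{bmatrix}\right\rangle_X
=\langle(\mathbb{I}+g\Lambda_\Omega)v,v'\rangle_{L^2(\mathcal E)}+\tfrac{2g}{\pi}h\overline{h'}+\langle u,u'\rangle_{L^2(\mathcal E)}+\ell\overline{\ell'}
\end{equation*}
makes $X$ a Hilbert space, inducing exactly the norm defined in the paper.

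Next, I would consider the wave-type operator $\tilde A$ with $\mathcal D(\tilde A)=\mathcal D(A)$ defined by the same block structure as $A$ but with $\tilde A_0$ in place of $A_0$, and prove that $\tilde A$ is skew-adjoint on $X$. Skew-symmetry is a direct computation: using the self-adjointness of $\mathbb{I}+g\Lambda_\Omega$ on $L^2(\mathcal{E})$ (Theorem \ref{formula_hadamard}), the cross terms in $\langle\tilde A z,z\rangle_X$ cancel after taking real parts. For the range condition, I would solve $(\mathbb I-\tilde A)z=F$ (and analogously for $\mathbb I+\tilde A$) by substitution: eliminating $u$ and $\ell$ reduces the problem to inverting $(2\mathbb I+g\Lambda_\Omega)$ on $L^2(\mathcal E)$ for the first component, which is possible since $2\mathbb I+g\Lambda_\Omega$ is self-adjoint and coercive, and to a trivial algebraic equation for the second component. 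This yields that $\tilde A$ and $-\tilde A$ are both m-dissipative, so $\tilde A$ is skew-adjoint; equivalently, the standard wave-generator result (see, e.g., \cite[Proposition 3.7.6]{Obs_book}) applies to the strictly positive self-adjoint $\tilde A_0$ and directly gives that $\tilde A$ generates a unitary $C^0$ group on $X$.

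Finally, one computes
\begin{equation*}
(A-\tilde A)\begin{bmatrix}v\\ h\\ u\\ \ell\end{bmatrix}=\begin{bmatrix}0\\ 0\\ v\\ 0\end{bmatrix},
\end{equation*}
which extends to a bounded operator on $X$ because the first component of $X$ is $W^{\frac12,2}(\mathcal{E})$, which embeds continuously in $L^2(\mathcal{E})$. Hence $A=\tilde A+P$ with $P\in\mathcal L(X)$, and the bounded perturbation theorem for $C^0$ group generators yields that $A$ generates a $C^0$ group on $X$. The main obstacle is really only the range (m-dissipativity) condition for $\tilde A$, which hinges on the coercivity of $\mathbb{I}+g\Lambda_\Omega$ supplied by Theorem \ref{formula_hadamard}; everything else is a direct verification.
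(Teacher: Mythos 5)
Your proof is correct and follows essentially the same strategy as the paper: realize $A$ as a bounded perturbation of a skew-adjoint wave operator built from a strictly positive shift of $A_0$, then invoke Stone's theorem (or \cite[Proposition 3.7.6]{Obs_book}) together with the bounded perturbation theorem for group generators. The only cosmetic difference is the shift: the paper uses $\mathbb{I}+A_0$ (so $Q[v,h,u,\ell]^T=[0,0,-v,-h]^T$), while you shift only the $\Lambda_\Omega$-block, which has the small advantage that the quadratic form of your $\tilde A_0$ reproduces the $X$-norm exactly, making $\tilde A$ skew-adjoint with respect to the given inner product on $X$ rather than an equivalent one; both choices lead to the same conclusion.
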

\begin{proof}
Let $\tilde A:\mathcal{D}(\tilde A)\to X$ be the operator defined by $\mathcal{D}(\tilde A)=\mathcal{D}(A)$
$$
\tilde A =A+Q,
$$
where $Q\in \mathcal{L}(X)$ is defined by
$$
Q\begin{bmatrix} v\\ h \\ u\\ \ell\end{bmatrix}=\begin{bmatrix} 0\\0 \\ -v\\ -h\end{bmatrix} \qquad\qquad\qquad\left(\begin{bmatrix} v\\h\\ u\\ \ell \end{bmatrix}\in \mathcal{D}(A)\right).
$$
Since $A_0$ is a positive operator from $H_1$ to $H$, we can apply Proposition 3.7.6 in \cite{Obs_book}, with $\mathbb{I}+A_0$ instead of $A_0$, it follows that $\tilde A$ is skew-adjoint, thus it generates a unitary group on $X$. Since $A=\tilde A-Q$, with $Q\in \mathcal{L}(X)$
the announced conclusion follows from a standard result, see, for instance, Theorem 2.11.2 in \cite{Obs_book}.
\end{proof}
Let us define $P:X\to X$ as
\begin{equation}\label{def_pert_P}
    P\begin{bmatrix}
    v\\
    h\\
    u\\
    \ell
    \end{bmatrix}=\begin{bmatrix}
        0\\
        0\\
        -g \ell\sigma\\
        \frac{1}{\pi}\int_{-\pi/2}^{\pi/2}(D_{\Omega}u)(\sin\theta,\cos\theta)\cos\theta \, {\rm d}\theta\end{bmatrix},
\end{equation}
where $\sigma\in L^2(\mathcal{E})$ is given by $\sigma(x)=\frac{1}{x^2}$, for $x\in\mathcal{E}$.

\begin{lemma}\label{P_perturbation}
With the above notation, we have $P\in\mathcal{L}(X)$.
\end{lemma}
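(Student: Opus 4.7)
The plan is to estimate each of the four components of $Pz$ separately. The first two components are identically zero and hence trivially lie in $W^{\frac12,2}(\mathcal{E})$ and $\mathbb{C}$ with zero norm. For the third, I would note that $\sigma(x) = 1/x^2$ satisfies $\|\sigma\|_{L^2(\mathcal{E})}^2 = 2\int_1^\infty x^{-4}\,dx = 2/3$, so that $\|-g\ell\sigma\|_{L^2(\mathcal{E})} \leq g\sqrt{2/3}\,|\ell|$, which is clearly controlled by $\|z\|_X$.

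The substantive step is the fourth (scalar) component. Using the explicit Poisson-type formula from Proposition \ref{21}, together with the algebraic identity
$$(\sin\theta\, \tilde x - 1)^2 + \cos^2\theta\, \tilde x^2 = 1 - 2\tilde x\sin\theta + \tilde x^2 = (\sin\theta - \tilde x)^2 + \cos^2\theta,$$
I observe that the two kernel terms in $D_\Omega u$ coincide on the unit semicircle, giving
$$(D_\Omega u)(\sin\theta,\cos\theta) = \frac{2\cos\theta}{\pi}\int_\mathcal{E}\frac{u(\tilde x)}{1 - 2\tilde x\sin\theta + \tilde x^2}\,d\tilde x.$$
Multiplying by $\cos\theta$ and applying Fubini's theorem (the double integrand is absolutely integrable on $\mathcal{E}\times(-\pi/2,\pi/2)$ because $1 - 2\tilde x\sin\theta + \tilde x^2 \geq (|\tilde x|-1)^2$ on $\mathcal{E}$; one may truncate $u$ to compactly supported functions and pass to the limit using Cauchy-Schwarz), this reduces to
$$\int_{-\pi/2}^{\pi/2}(D_\Omega u)(\sin\theta,\cos\theta)\cos\theta\, d\theta = \int_\mathcal{E} K(\tilde x)\, u(\tilde x)\,d\tilde x, \qquad K(\tilde x) = \frac{2}{\pi}\int_{-\pi/2}^{\pi/2}\frac{\cos^2\theta}{1 - 2\tilde x\sin\theta + \tilde x^2}\,d\theta.$$

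The core identification is $K(\tilde x) = 1/\tilde x^2 = \sigma(\tilde x)$. I would establish this in one of two equivalent ways. Either through a direct residue computation: substituting $z = e^{i\theta}$ rewrites $K(\tilde x)$ as a contour integral with poles at $z = 0$ (double) and $z = -1/\tilde x$ inside the unit disc (since $|\tilde x| > 1$), whose residues add up to $-2/\tilde x$, yielding $K(\tilde x) = 1/\tilde x^2$. Alternatively, and more conceptually, I would apply Green's second identity in $\Omega$ to the two harmonic functions $D_\Omega u$ and $\varphi_1$, exploiting the boundary data $\varphi_1(x,0) = 0$, $\partial_\nu\varphi_1|_{y=0} = \sigma$, $\varphi_1|_{r=1} = -\cos\theta$, $\partial_\nu\varphi_1|_{r=1} = -\cos\theta$, together with $(D_\Omega u)|_\mathcal{E} = u$ and $\partial_r(D_\Omega u)|_{r=1} = 0$. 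All cross terms vanish and one obtains directly $\int_{-\pi/2}^{\pi/2}(D_\Omega u)\cos\theta\, d\theta = \int_\mathcal{E} \sigma\, u\, dx$ for $u \in \mathcal{D}(\mathcal{E})$, which extends to $u \in L^2(\mathcal{E})$ by density.

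Once either route yields the identity, Cauchy-Schwarz gives
$$\biggl|\frac{1}{\pi}\int_{-\pi/2}^{\pi/2}(D_\Omega u)(\sin\theta,\cos\theta)\cos\theta\, d\theta\biggr| \leq \frac{1}{\pi}\|\sigma\|_{L^2(\mathcal{E})}\|u\|_{L^2(\mathcal{E})} \leq C\|z\|_X,$$
which closes the estimate. I expect the main obstacle to be the identification $K = \sigma$: in the Green's identity approach one has to justify the vanishing of boundary terms at infinity (using the $1/r$-type decay of $\varphi_1$, the decay of $D_\Omega u$, and the $L^2$ gradient estimates from Proposition \ref{prop_var}, typically via truncation to $\Omega\cap B_R$ and $R\to\infty$), while the direct route requires carefully carrying through the residue computation.
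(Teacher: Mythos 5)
Your proof is correct and takes a genuinely sharper route than the paper's. Both proofs begin by using the explicit Poisson-type formula of Proposition~\ref{21} and the algebraic identity that the two kernel terms coincide on the unit semicircle, reducing the fourth component of $Pz$ to $\int_{\mathcal{E}}K(\tilde x)u(\tilde x)\,{\rm d}\tilde x$ with
$K(\tilde x)=\tfrac{2}{\pi}\int_{-\pi/2}^{\pi/2}\tfrac{\cos^2\theta}{\tilde x^2-2\tilde x\sin\theta+1}\,{\rm d}\theta$.
The paper then stops short of evaluating $K$: it bounds $\cos^2\theta\leqslant\cos\theta$, integrates the rational function, and verifies directly that the resulting expression (a combination of $\tfrac{1}{\tilde x}\ln|\tilde x+1|$ and $\tfrac{1}{\tilde x}\ln(\tilde x^2+1)$) lies in $L^2(\mathcal{E})$ by splitting the domain and handling the integrable logarithmic singularity at $\tilde x=-1$. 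You instead evaluate $K$ in closed form and find $K(\tilde x)=1/\tilde x^2=\sigma(\tilde x)$, which not only gives the $L^2$ bound at once but exposes a pleasant structural fact: the fourth (scalar) component of $P$ pairs $u$ against the same function $\sigma$ that appears in the third component, reflecting the Hamiltonian-type symmetry of the fluid--rigid body coupling. Your Green's-identity derivation is the cleanest way to see this, and you correctly flag that the only genuine work there is controlling the boundary term at infinity, using the $O(1/r)$ decay of $\varphi_1$, the $O(1/r^2)$ decay of $\nabla\varphi_1$, and the $L^2(\Omega)$ bound on $\nabla(D_\Omega u)$ from Proposition~\ref{prop_var}.

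Two small points. In the residue route, after symmetrizing $\theta\mapsto\pi-\theta$ to obtain $\tfrac12\int_0^{2\pi}$, the poles of the resulting contour integrand inside the unit disc are $z=0$ (double) and $z=i/\tilde x$, not $z=-1/\tilde x$; the final value $K(\tilde x)=1/\tilde x^2$ is nevertheless correct. Also, the Fubini justification as written is too crude: the bound $\tilde x^2-2\tilde x\sin\theta+1\geqslant(|\tilde x|-1)^2$ does not by itself give absolute integrability of the double integral for arbitrary $u\in L^2(\mathcal{E})$, since $u/(|\tilde x|-1)^2$ need not be integrable near $|\tilde x|=1$. Either apply Tonelli after first computing $\int_{-\pi/2}^{\pi/2}\tfrac{\cos^2\theta}{\tilde x^2-2\tilde x\sin\theta+1}\,{\rm d}\theta=\tfrac{\pi}{2\tilde x^2}$ (so the iterated integral of the absolute value is $\leqslant\tfrac{\pi}{2}\|u\|_{L^2(\mathcal{E})}\|\sigma\|_{L^2(\mathcal{E})}$), or truncate to $u\in\mathcal{D}(\mathcal{E})$ as you suggest, where the crude bound does apply because the support stays away from $\pm1$, and pass to the limit.
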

\begin{proof}
   Let $[v,h,u,\ell]^T\in X$. From Proposition \ref{21} we have 
\begin{equation}\label{first_eq_lemma_pert}
\begin{aligned}
 \int_{-\pi/2}^{\pi/2}(D_{\Omega}u)(\sin\theta,\cos\theta)\cos\theta\,{\rm d}\theta&= 2\int_{-\pi/2}^{\pi/2} \int_{\mathcal{E}}\frac{u(\tilde x)\cos^2\theta}{\tilde x^2-2\tilde x\sin\theta +1}\,{\rm d}\tilde x\,{\rm d}\theta\\
 &= 2\int_{\mathcal{E}}u(\tilde x)\int_{-\pi/2}^{\pi/2}\frac{\cos^2\theta}{\tilde x^2-2\tilde x\sin\theta +1}\,{\rm d}\theta\,{\rm d}\tilde x.
\end{aligned}
\end{equation}
We note that
\begin{equation}
\begin{aligned}
\int_{-\pi/2}^{\pi/2}\frac{\cos^2\theta}{\tilde x^2-2\tilde x\sin\theta +1}\,{\rm d}\theta&=\int_{-\pi/2}^{0}\frac{\cos^2\theta}{\tilde x^2-2\tilde x\sin\theta +1}\,{\rm d}\theta+\int_{0}^{\pi/2}\frac{\cos^2\theta}{\tilde x^2-2\tilde x\sin\theta +1}\,{\rm d}\theta\\
&=\int_{0}^{\pi/2}\frac{\cos^2\theta}{\tilde x^2+2\tilde x\sin\theta +1}\,{\rm d}\theta+\int_{0}^{\pi/2}\frac{\cos^2\theta}{\tilde x^2-2\tilde x\sin\theta +1}\,{\rm d}\theta\\
&=: I_1(\tilde x)+I_2(\tilde x).
\end{aligned}
\end{equation}
We show below that the map $\tilde x\mapsto I_1(\tilde x)$ is in $L^2(\mathcal{E})$. To this aim, we note that
\begin{equation}\label{bound_I1}
\begin{aligned}
 I_1(\tilde x) &\leqslant \int_{0}^{\pi/2}\frac{\cos\theta}{\tilde x^2+2\tilde x\sin\theta+1}{\rm d}\theta\\
 &=\int_0^1\frac{{\rm d}s}{\tilde x^2+2\tilde xs+1}=\frac{1}{2\tilde x}\ln\left(\tilde x^2+2\tilde x s+1\right)\Bigg|_{s=0}^{s=1}\\
 &=\frac{1}{2\tilde x}\ln\left((\tilde x+1)^2\right)-\frac{1}{2\tilde x}\ln(\tilde x^2+1)\\
 &= \frac{1}{\tilde x}\ln|\tilde x+1|-\frac{1}{2\tilde x}\ln(\tilde x^2+1)
\end{aligned}
\end{equation}
For $\varepsilon>0$ small enough we have
\begin{equation}
\begin{aligned}
    \int_{-\infty}^{-1}\frac{1}{\tilde x^2}\ln^2|\tilde x+1|\,{\rm d}\tilde x &=\int_{-\infty}^{-2}\frac{1}{\tilde x^2}\ln^2|\tilde x+1|\,{\rm d}\tilde x+\int_{-2}^{-1-\varepsilon}\frac{1}{\tilde x^2}\ln^2|\tilde x+1|\,{\rm d}\tilde x\\
    &\quad+\int_{-1-\varepsilon}^{-1}\frac{1}{\tilde x^2}\ln^2|\tilde x+1|\,{\rm d}\tilde x 
\end{aligned}
\end{equation}
For the first integral on the right-hand side, a straightforward change of variables yields 
\begin{equation}\label{pipe6}
\int_{-\infty}^{-2}\frac{1}{\tilde x^2}\ln^2|\tilde x+1|\,{\rm d}\tilde x = \int_{-2}^{-1}\frac{1}{\tilde x^2}\ln^2|\tilde x+1|\,{\rm d}\tilde x <+\infty 
\end{equation} 
As for the second integral, taking the limit $\varepsilon\to 0$ leads to the same value as the first one. It only remains to estimate the third integral as $\varepsilon\to 0$.
\begin{equation}\label{third_integral}
\begin{aligned}
    \int_{-1-\varepsilon}^{-1}\frac{1}{\tilde x^2}\ln^2|\tilde x+1|\,{\rm d}\tilde x&\leqslant\int_{-1-\varepsilon}^{-1}\ln^2|\tilde x+1|\,{\rm d}\tilde x\\
    &=\int_{-\varepsilon}^{0}\ln^2|s|\,{\rm d}s\\
    &=s\ln^2|s|\Bigg|_{-\varepsilon}^0-2\int_{-\varepsilon}^0\ln|s|\,{\rm d}s\\
    &= \varepsilon\ln^2(\varepsilon)-2\varepsilon\ln(\varepsilon)+2(1-\varepsilon)\xrightarrow[\varepsilon\to 0^+]{}2.
\end{aligned}
\end{equation}
On the other hand, we have
\begin{equation}\label{pipe6_2}
    \int_{1}^{\infty}\frac{1}{\tilde x^2}\ln^2|\tilde x+1|\,{\rm d}\tilde x<+\infty.
\end{equation}
Using \eqref{pipe6}, \eqref{third_integral} and \eqref{pipe6_2} in \eqref{bound_I1} we obtain
\begin{equation}
\begin{aligned}
 \int_{\mathcal{E}}|I_1(\tilde x)|^2\,{\rm d}\tilde x&\leqslant \int_{\mathcal{E}}\frac{1}{\tilde x^2}\ln^2|\tilde x+1|\,{\rm d}\tilde x+\int_{\mathcal{E}}\frac{1}{4\tilde x^2}\ln(\tilde x^2+1)\,{\rm d}\tilde x<+\infty,
\end{aligned}
\end{equation}
so that indeed  $I_1\in L^2(\mathcal{E})$. Similarly, we get $I_2\in L^2(\mathcal{E})$. Going back to \eqref{first_eq_lemma_pert} and using Cauchy-Schwarz inequality we get
\begin{equation}
\begin{aligned}
     \int_{-\pi/2}^{\pi/2}(D_{\Omega}u)(\sin\theta,\cos\theta)\cos\theta\,{\rm d}\theta&= 2\int_{-\pi/2}^{\pi/2} \int_{\mathcal{E}}\frac{u(\tilde x)\cos^2\theta}{\tilde x^2-2\tilde x\sin\theta +1}\,{\rm d}\tilde x\,{\rm d}\theta\\
     &=2\int_{\mathcal{E}}u(\tilde x)\left(I_1(\tilde x)+I_2(\tilde x)\right){\rm d}\tilde x\\
     &\leqslant C_1\|u\|_{L^2(\mathcal{E})},
\end{aligned}
\end{equation}
where $C_1=2\|I_1+I_2\|_{L^2(\mathcal{E})}$.
Finally, we get
\begin{equation}
\begin{aligned}
 \left\|P\begin{bmatrix} v\\h\\u\\ \ell\end{bmatrix}\right\|_X^2&=\|g\ell\sigma\|_{L^2(\mathcal{E})}^2+\frac{1}{\pi^2}\left|\int_{-\pi/2}^{\pi/2}(D_{\Omega}u)(\sin\theta,\cos\theta)\,\cos\theta\,{\rm d}\theta\right|^2\\
 &\leqslant C\left(\frac{2}{3}g^2|\ell|^2+\frac{1}{\pi^2}\|u\|_{L^2(\mathcal{E})}^2\right)\\
 &\leqslant C\left(\|(\mathbb{I}+g\Lambda_\Omega)^{\frac12}v\|_{L^2(\mathcal{E})}^2+\frac{2g}{\pi}|h|^2+\|u\|_{L^2(\mathcal{E})}^2+|\ell|^2\right)\\
 &\leqslant C\left\|\begin{bmatrix}v\\h\\u\\ \ell \end{bmatrix}\right\|_X^2
\end{aligned}
\end{equation}
where $C>0$ absorbed the other constants in each line.
\end{proof}

We are now in a position to prove Theorem \ref{well_posedness}.
\begingroup
\renewcommand{\proofname}{Proof of Theorem \ref{well_posedness} }
\begin{proof}
We first note that, denoting $z_0=[v_0,h_0,v_1,h_1]^T$ and  $z(t)=\left[v(t,\cdot),h(t),\frac{\partial v}{\partial t}(t,\cdot),\dot{h}(t)\right]^T$,
the system \eqref{reformulated_system} can be rephrased as
\begin{equation}\label{A1_equation}
\left\{
\begin{aligned}
\dot z(t) &= A_1z(t)+F(t)\qquad(t\geqslant 0),\\
z(0)&=z_0,
\end{aligned}
\right.
\end{equation}
where $\mathcal{D}(A_1)=\mathcal{D}(A)$, $A_1=A+P$, with $A$ and $P$ defined in \eqref{def_op_A} and \eqref{def_pert_P}, respectively, and 
\begin{equation}
    F(t)=\begin{bmatrix}0\\0\\0\\ \frac{f(t)}{\pi \rho}\end{bmatrix}.
\end{equation}
By Lemma \ref{A_generates_group}, the operator $A$ generates a group $\mathbb{T}=(\mathbb{T}_t)_{t\in\mathbb{R}}$ on $X$. 
From Lemma \ref{P_perturbation} we know that $P\in\mathcal{L}(X)$. A standard perturbation argument (see, for instance, Theorem 2.11.2 in \cite{Obs_book}) implies that $A_1$ generates a $C^0$ group $\mathbb{T}^P=(\mathbb{T}_t^P)_{t\in\mathbb{R}}$ on $X$. Since $f\in L_{{\rm loc}}^1([0,\infty))$ we have (see Theorem 4.1.6 in \cite{Obs_book}) that for every $z_0\in X$ the Cauchy problem \eqref{A1_equation} has a unique solution $z\in C([0,\infty),X)$ given by
\begin{equation}\label{solution_semigroup}
    z(t)=\mathbb{T}_t^Pz_0+\int_0^t\mathbb{T}_{t-s}^PF(s)\,{\rm d}s\qquad(t\geqslant 0).
\end{equation}
This fact clearly implies the first assertion of  Theorem \ref{well_posedness}. The second assertion of  Theorem \ref{well_posedness} simply follows from
the well known fact that, if $z_0\in \mathcal{D}(A_1)$ and $F\in L^1_{\rm loc}([0,\infty),\mathcal{D}(A))$, then the function $z$ defined by \eqref{solution_semigroup} is continuous from $[0,\infty)$ to $\mathcal{D}(A_1)$.
\end{proof}
\endgroup

\section*{Acknowledgments}
This work is funded by the European Union (Horizon Europe MSCA project ModConFlex, grant number 101073558). The second author acknowledges the support of the BOURGEONS project, grant ANR-23-CE40-0014-01 of the French National Research Agency (ANR).

\section*{Data availability statement}

 The only data supporting the findings of this study are the articles and books in the reference list, so that they are openly available.

\section*{Conflict of interest}
On behalf of all authors, the corresponding author states that there is no conflict of interest.



\vspace{1cm}
\noindent
\textsc{Vicente Ocqueteau} and \textsc{Marius Tucsnak} \newline
Institut de Math\'ematiques de Bordeaux (IMB), Universit\'e de Bordeaux \newline
351, Cours de la Lib\'eration - F 33 405, France \newline
\textit{E-mail address, V. Ocqueteau:} \href{mailto:vicente.ocqueteau@math.u-bordeaux.fr}{vicente.ocqueteau@math.u-bordeaux.fr} \newline
\textit{E-mail address, M. Tucsnak:} \href{mailto:marius.tucsnak@u-bordeaux.fr}{marius.tucsnak@u-bordeaux.fr}

\end{document}